\newtheorem{theorem}{Theorem}
\newtheorem{lemma}{Lemma}
\newtheorem{corollary}{Corollary}[theorem]
\icmltitlerunning{Ill-Posedness and Optimization Geometry for Nonlinear Neural Network Training}
\begin{document}

\twocolumn[
\icmltitle{Ill-Posedness and Optimization Geometry for \\ Nonlinear Neural Network Training}



\icmlsetsymbol{equal}{*}

\begin{icmlauthorlist}
\icmlauthor{Thomas O'Leary-Roseberry}{oden}
\icmlauthor{Omar Ghattas}{oden}

\end{icmlauthorlist}

\icmlaffiliation{oden}{Oden Institute for Computational Engineering and Sciences, The University of Texas at Austin, Austin, TX}

\icmlcorrespondingauthor{Thomas O'Leary-Roseberry}{tom@oden.utexas.edu}

\icmlkeywords{Machine Learning, ICML}

\vskip 0.3in
]



\printAffiliationsAndNotice{}  

\begin{abstract}

In this work we analyze the role nonlinear activation functions play
at stationary points of dense neural network training problems. We
consider a generic least squares loss function training
formulation. We show that the nonlinear activation functions used in
the network construction play a critical role in classifying
stationary points of the loss landscape. We show that for shallow
dense networks, the nonlinear activation function determines the
Hessian nullspace in the vicinity of global minima (if they exist),
and therefore determines the ill-posedness of the training
problem. Furthermore, for shallow nonlinear networks we show that the
zeros of the activation function and its derivatives can lead to
spurious local minima, and discuss conditions for strict saddle
points. We extend these results to deep dense neural networks, showing
that the last activation function plays an important role in
classifying stationary points, due to how it shows up in the gradient
from the chain rule.

\end{abstract}

\section{Introduction}

%
%
%
%

Here, we characterize  the optimization geometry of nonlinear least-squares
regression problems for generic dense neural networks and analyze the
ill-posedness of the training problem. 
Neural networks are a popular nonlinear functional approximation
technique that are succesful in data driven approximation regimes. A
one-layer neural network can approximate any continuous function on a
compact set, to a desired accuracy given enough neurons
\cite{Cybenko1989,Hornik1989}. Dense neural networks have been shown
to be able to approximate polynomials arbitrarily well given enough
hidden layers \cite{SchwabZech2019}. While no general functional
analytic approximation theory exists for neural networks, they are
widely believed to have great approximation power for complicated
patterns in data \cite{PoggioLiao2018}.  

Training a neural network, i.e., determining optimal values of network
parameters to fit given data, can be accomplished by solving the
nonconvex optimization problem of minimizing a loss function (known as
empirical risk minimization). Finding a global minimum is NP-hard and
instead one usually settles for local minimizers
\cite{Bertsekas1997,MurtyKabadi1987}. Here we seek to characterize how
nonlinear activation functions affect the least-squares optimization
geometry at stationary points. In particular, we wish to characterize
the conditions for strict saddle points and spurious local
minima. Strict saddle points are stationary points where the Hessian
has at least one direction of strictly negative curvature. They do not
pose a significant problem for neural network training, since they can
be escaped efficiently with first and second order methods
\cite{DauphinPescanuGulcehre2014,JinChiGeEtAl2017,JinNetrapalliJordan2017,NesterovPolyak2006,
  OLearyRoseberryAlgerGhattas2019}. On the other hand, spurious local
minima (where the gradient vanishes but the data misfit is nonzero)
are more problematic; escaping from them in a systematic way may
require third order information \cite{AnandkumarGe2016}.

We also seek to analyze the rank deficiency of the Hessian of the loss
function at global minima (if they exist),
in order to characterize the ill-posedness of the nonlinear neural
network training problem.  Training a neural network is,
mathematically, an inverse problem; rank deficiency of the Hessian
often makes solution of the inverse problem unstable to perturbations
in the data and leads to severe numerical difficulties when using
finite precision arithmetic \cite{Hansen98}. While early termination
of optimization iterations often has a regularizing effect
\cite{Hanke95, EnglHankeNeubauer96}, and general-purpose
regularization operators (such as $\ell^2$ or $\ell^1$) can be invoked, when
to terminate the iterations and how to choose the regularization to
limit bias in the solution are omnipresent challenges. On the other
hand, characterizing the nullspace of the Hessian can provide a basis
for developing a principled regularization operator that
parsimoniously annihilates this nullspace, as has been recently done
for shallow linear neural networks \cite{ZhuSoudry2018}.

We consider both shallow and deep dense neural network
parametrizations. The dense parametrization is sufficiently general
since convolution operations can be represented as cyclic matrices with
repeating block structure. For the sake of brevity, we do not consider
affine transformations, but this work can easily be extended to this
setting. We begin by analyzing shallow dense nonlinear networks, for which we
show that the nonlinear activation function plays a critical role in
classifying stationary points. In particular, if the neural network
can exactly fit the data, and zero misfit global minima exist, we show
how the Hessian nullspace depends on the activation function and its
first derivative at these points. 

For linear networks, results about local minima, global minima,
strict saddle points, and optimal regularization operators have been shown
\cite{BaldiHornik1989,ZhuSoudry2018}. The linear network case is a nonlinear matrix factorization problem, given data matrices $X \in \mathbb{R}^{n \times d}, Y \in \mathbb{R}^{m \times d}$, one seeks to find $W_1^* \in \mathbb{R}^{m \times r},W_0^* \in \mathbb{R}^{r \times n}$ such that they minimize
\begin{equation}
	\frac{1}{2}\|Y - W_1W_0X\|_F^2.
\end{equation}
When the data matrix $X$ has full row rank, then by the Eckart-Young Theorem, the solution is given by the rank $r$ SVD of $YX^T(XX^T)^{-1}$, which we denote with a subscript $r$ 
\begin{equation}
	W_1^*W_0^* = [YX^T(XX^T)^{-1}]_r.
\end{equation}
The solution is non-unique since for any invertible matrix $B \in \mathbb{R}^{r\times r}$
\begin{equation} \label{linear_ill_posedness}
	(W_1^*B)(B^{-1}W_0^*)^= [YX^T(XX^T)^{-1}]_r
\end{equation}
is also a solution. We show that in addition to inheriting issues related to ill-posedness of matrix factorization, the nonlinear activation functions in the nonlinear training problem create ill-posedness and non-uniqueness.

We show that stationary points not corresponding to zero misfit global
minima are determined by the activation function and its first
derivative through an orthogonality condition. In
  contrast to linear networks, for which the existence of spurious local minima depends only on the rank of the training data and the weights, we show that for nonlinear networks, both spurious local
  minima and strict saddle points exist, and depend on the activation
  functions, the training data, and the weights.

We extend these results to deep dense neural networks where stationary
points can arise from exact reconstruction of the training data by the
network, or an orthogonality condition that involves the activation
functions of each layer of the network and their first derivatives.

For nonlinear neural networks, some work exists on analyzing networks
with ReLU activation functions; in particular Safran et.\ al.\ establish
conditions for the existence of spurious local minima for two layer
ReLU networks \cite{SafranShamir2017}.

\subsection{Notation and Definitions}
For a given matrix $A \in \mathbb{R}^{m \times n}$, its vectorization, $\text{vec}(A) \in \mathbb{R}^{mn}$ is an $mn$ vector that is the columns of $A$ stacked sequentially. Given a vector $z \in \mathbb{R}^m$, its diagonalization $\text{diag}(z) \in \mathbb{R}^{m\times m}$ is the diagonal matrix with entry $ii$ being component $i$ from $z$. The diagvec operation is the composition $\text{diagvec}(A) = \text{diag}(\text{vec})(A) \in \mathbb{R}^{mn\times mn}$, this is sometimes shortened to dvec. The identity matrix in $\mathbb{R}^{d \times d}$ is denoted $I_d$. 
We use the notation $\nabla_X f(X)$ to mean derivatives of a function $f$ with respect to a matrix $X$, and $\partial_{\text{vec}(X)}f(\text{vec}(X))$ when expressing derivatives with respect to a vectorized matrix $\text{vec}(X)$: $\partial_{\text{vec}(X)}f(X) = \frac{\partial f}{\partial\text{vec}(X)}$. For matrices $A\in \mathbb{R}^{m \times n}$ and $B \in \mathbb{p \times q}$, the Kronecker product $A \otimes B \in \mathbb{R}^{pm \times qn}$ is the block matrix
\begin{equation}
	A\otimes B = \left[\begin{array}{ccc} a_{11}B &\cdots& a_{1n}B \\
	\vdots  & \ddots & \vdots \\
	a_{m1}B & \cdots & a_{mn}B\end{array} \right]
\end{equation}
For matrices $A,B \in \mathbb{R}^{m \times n}$, $A\circ B \in \mathbb{R}^{m\times n}$ is the Hadamard (element-wise) product. For a matrix $A \in \mathbb{R}^{m \times n}$ and a matrix $B \in \mathbb{R}^{n \times k}$, the expression $A\perp B$ means that the rows of $A$ are orthogonal to the columns of $B$, and thus $AB=0$.

For a differentiable function $F:\mathbb{R}^{d_W} \rightarrow \mathbb{R}$, and a parameter $W_0\in \mathbb{R}^{d_W}$, we say that $W_0$ is a first order stationary point if $\nabla F(W_0)=0$, we say that $W_0$ is a strict saddle point if there exists a negative eigenvalue for the Hessian $\nabla^2 F$. We say that $W_0$ is a local minimum if the eigenvalues of the Hessian $\nabla^2 F$ are all nonnegative. We say that $W_0$ is a global minimum if $F(W_0) \leq F(W)$ for all $W \in \mathbb{R}^{d_W}$.

\section{Stationary Points of Shallow Dense Network} \label{shallow_section}
We start by considering a one layer dense neural network training problem. Given training data matrices $X \in \mathbb{R}^{n \times d}, Y \in \mathbb{R}^{m \times d}$, the shallow neural network architecture consists of an encoder weight matrix $W_0 \in \mathbb{R}^{r\times n}$, a nonlinear activation function $\sigma$ (which is applied element-wise), and then a decoder weight matrix $W_1\in \mathbb{R}^{m \times r}$. The training problem (empirical risk minimization) may then be stated as 
\begin{align}
	\min_{W_1,W_0} &F(W_1,W_0) = \sum_{i=1}^d \frac{1}{2}\|y_i - W_1\sigma(W_0x_i)\|_{\ell^2(\mathbb{R}^m)}^2 \nonumber \\
	 	&= \frac{1}{2}\|Y - W_1\sigma(W_0X)\|^2_{F(\mathbb{R}^{m \times d})}.
\end{align} 

We will begin by analyzing first order stationary points of the objective function $F$. 

\begin{theorem} \label{critical_points_thm}
The gradient of the objective function $F$ is given by
\begin{align}
	\nabla F(W_1,W_0) &= [\nabla_{W_1}F(D,E)^T, \nabla_{W_0}F(W_1,W_0)^T]^T \nonumber \\
	\nabla_{W_1}F(W_1,W_0) &= (W_1\sigma(W_0X) - Y)\sigma(W_0X)^T \\
	\nabla_{W_0}F(W_1,W_0) &=  \nonumber \\
	[\sigma'(W_0X)&\circ (W_1^T(W_1\sigma(W_0X) - Y))]X^T.
\end{align}
First order stationary points are characterized by two main conditions:
\begin{enumerate}
\item A global minimum where the misfit is exactly zero: $W_1\sigma(W_0X) = Y$. The possibility for which depends on the representation capability of the network, and the data. \label{global minimum}

\item A stationary point not corresponding to zero misfit: $\sigma'(W_0X) \circ W_1^T(W_1\sigma(W_0X) - Y) \perp X^T$, and $(W_1\sigma(W_0X) -Y) \perp \sigma(W_0X)^T$ \label{not_global_minimum}
\end{enumerate}
\end{theorem}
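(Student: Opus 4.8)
The plan is to obtain the two gradient blocks by matrix differentials and then read off the stationarity conditions by setting each block to zero. I would write the loss as $F = \tfrac{1}{2}\|R\|_F^2 = \tfrac{1}{2}\operatorname{tr}(R^T R)$ with residual $R = W_1\sigma(W_0X) - Y$ and hidden activations $A = \sigma(W_0X)$, so that $dF = \operatorname{tr}(R^T\, dR)$. For the decoder block, the map $W_1 \mapsto W_1 A$ is linear, so $dR = (dW_1)A$ and $dF = \operatorname{tr}(R^T (dW_1) A) = \operatorname{tr}(A R^T\, dW_1)$ by the cyclic property of trace. Matching against $dF = \operatorname{tr}(\nabla_{W_1}F^T\, dW_1)$ immediately gives $\nabla_{W_1}F = R A^T = (W_1\sigma(W_0X)-Y)\sigma(W_0X)^T$.

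The encoder block is the substantive computation. Writing $Z = W_0X$, the element-wise nonlinearity differentiates as $d\sigma(Z) = \sigma'(Z)\circ dZ$ with $dZ = (dW_0)X$, so $dR = W_1\big(\sigma'(Z)\circ (dW_0)X\big)$ and $dF = \operatorname{tr}\big(R^T W_1(\sigma'(Z)\circ (dW_0)X)\big)$. Setting $B = W_1^T R$ and applying the Hadamard--trace identity $\operatorname{tr}(B^T(C\circ D)) = \operatorname{tr}((B\circ C)^T D)$ to move the element-wise factor $\sigma'(Z)$ off the differential, then using the cyclic property to expose $dW_0$, yields $\nabla_{W_0}F = \big(\sigma'(W_0X)\circ(W_1^T(W_1\sigma(W_0X)-Y))\big)X^T$, as claimed.

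With both blocks in hand, the characterization follows by setting $\nabla F = 0$, which holds iff both blocks vanish. One route is $R = 0$, i.e.\ $W_1\sigma(W_0X) = Y$, which annihilates $\nabla_{W_1}F$ and $\nabla_{W_0}F$ simultaneously; this is the zero-misfit global minimum of Condition~\ref{global minimum}, whose existence is contingent on the network's representational capacity and the data. The remaining stationary points have $R \neq 0$ and are precisely those for which $\nabla_{W_1}F = R A^T = 0$ and $\nabla_{W_0}F = (\sigma'(W_0X)\circ(W_1^T R))X^T = 0$, i.e.\ the two orthogonality relations $(W_1\sigma(W_0X)-Y)\perp \sigma(W_0X)^T$ and $\sigma'(W_0X)\circ W_1^T(W_1\sigma(W_0X)-Y)\perp X^T$ of Condition~\ref{not_global_minimum}.

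The main obstacle is the encoder gradient: unlike the linear decoder block, it requires propagating the element-wise derivative $\sigma'$ through the loss and recognizing that the Hadamard factor can be commuted against one argument of the trace pairing. The Hadamard--trace identity is the one nonroutine algebraic step, and care is needed to keep the shapes of $\sigma'(Z)$, $B = W_1^T R$, and the differential $(dW_0)X$ consistent so that the trailing $X^T$ lands on the correct side.
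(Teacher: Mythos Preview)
Your proof is correct, but it takes a different route from the paper's. The paper derives the gradient blocks by vectorizing: it writes $\text{misfit} = \text{vec}(Y - W_1\sigma(W_0X))$, computes $\partial_{\text{vec}(W_1)}\text{misfit} = -[\sigma(W_0X)^T\otimes I_n]$ and $\partial_{\text{vec}(W_0)}\text{misfit} = -[I_d\otimes W_1]\,\text{diagvec}(\sigma'(W_0X))[X^T\otimes I_r]$ (the latter via a separate lemma on $\partial_{\text{vec}(W_0)}\text{vec}(\sigma(W_0X))$), and then contracts each against the misfit to recover the matrix-form gradients. Your trace/differential argument with the Hadamard--trace identity $\operatorname{tr}(B^T(C\circ D)) = \operatorname{tr}((B\circ C)^T D)$ is more direct and avoids the Kronecker machinery entirely, which is an advantage if the gradient is all you need. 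The paper's vectorized route is heavier here but is the same framework it reuses to compute the four Hessian blocks (both Gauss--Newton and non-Gauss--Newton pieces), where the $\text{diagvec}$ and Kronecker structure is genuinely needed; so its choice is an investment that pays off downstream. The stationarity analysis---splitting into $R=0$ versus $R\neq 0$ with the two orthogonality conditions---is identical in both.
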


\begin{proof}
The partial derivatives of the objective function $F(W_1,W_0)$ are derived in Lemma \ref{derivation_of_grad_lemma}. At a first order stationary point of the objective function $F$ both partial derivatives must be zero:
\begin{align} 
	 (W_1\sigma(W_0X) - Y)\sigma(W_0X)^T  &= 0 \label{crit_pt_W1}\\
	 [\sigma'(W_0X)\circ (W_1^T(W_1\sigma(W_0X) - Y))]X^T &= 0 \label{crit_pt_W0}.
\end{align}
In the case that $W_1\sigma(W_0X) = Y$ then both terms are zero, and the corresponding choices of $W_1,W_0$ define a global minimum. This can be seen since $F$ is a nonnegative function, and in this case it is exactly zero. 

Stationary points where $W_1\sigma(W_0X) \neq Y$ are characterized by orthogonality conditions. If $\nabla_{W_1} F(W_1,W_0) =0$, then this means that $(W_1\sigma(W_0X) - Y) \perp \sigma(W_0X)^T$; that is, the rows of $W_1\sigma(W_0X) - Y$ and the columns of $\sigma(W_0X)^T$ are pairwise orthogonal. If $\nabla_{W_0} F(W_1,W_0)=0$, then this similarly means that $[\sigma'(W_0X)\circ (W_1^T(W_1\sigma(W_0X) - Y))] \perp X^T$
\end{proof} 

\begin{corollary} \label{crit_pt_corollary}
Any $W_1,W_0$ such that $\sigma(W_0X) = 0$ and $\sigma'(W_0X) \circ (W_1^T(W_1\sigma(W_0X) - Y)) = 0$ correspond to first order stationary points of the objective function $F$. In particular, any $W_0$ for which $\sigma(W_0X) = \sigma'(W_0X) = 0$ corresponds to a first order stationary point for all $W_1$.
\end{corollary}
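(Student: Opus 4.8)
The plan is to verify the two stationarity conditions directly by substituting the hypotheses into the gradient blocks supplied by Theorem~\ref{critical_points_thm}, rather than appealing to either of the two characterizing cases in that theorem. Recall that $(W_1,W_0)$ is a first order stationary point precisely when $\nabla_{W_1}F(W_1,W_0) = 0$ and $\nabla_{W_0}F(W_1,W_0) = 0$ hold simultaneously, so it suffices to show that each of the two matrix expressions collapses to the zero matrix under the stated assumptions. This is a pure substitution argument; there is no genuine obstacle beyond identifying the common factors that get annihilated.

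First I would treat the decoder gradient. Since $\nabla_{W_1}F(W_1,W_0) = (W_1\sigma(W_0X) - Y)\sigma(W_0X)^T$, the assumption $\sigma(W_0X) = 0$ forces the trailing factor $\sigma(W_0X)^T$ to vanish, so the entire product is the zero matrix irrespective of the value of the residual $W_1\sigma(W_0X) - Y$. Next I would treat the encoder gradient $\nabla_{W_0}F(W_1,W_0) = [\sigma'(W_0X)\circ (W_1^T(W_1\sigma(W_0X) - Y))]X^T$; here the bracketed Hadamard term is precisely the quantity assumed to be zero by the second hypothesis, so its product with $X^T$ is again zero. Both blocks vanish, hence $(W_1,W_0)$ is first order stationary.

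For the \emph{in particular} claim, the one extra observation is that the Hadamard product is taken entrywise, so $\sigma'(W_0X)\circ M = 0$ for every conformable matrix $M$ whenever $\sigma'(W_0X)$ is itself the zero matrix. Consequently, if $\sigma'(W_0X) = 0$ then the second hypothesis $\sigma'(W_0X)\circ (W_1^T(W_1\sigma(W_0X) - Y)) = 0$ holds automatically, independent of $W_1$; combined with $\sigma(W_0X) = 0$ this makes every pair $(W_1,W_0)$ with $\sigma(W_0X) = \sigma'(W_0X) = 0$ stationary for all decoder weights $W_1$. The only point requiring care is to note that these hypotheses are sufficient conditions that make the gradients vanish by killing the shared factors $\sigma(W_0X)^T$ and the Hadamard bracket, rather than requiring either the zero-misfit condition or the orthogonality condition of Theorem~\ref{critical_points_thm} to hold on its own.
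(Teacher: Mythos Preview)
Your proposal is correct and matches the paper's intended argument: the corollary is stated without a separate proof in the paper, as it follows immediately from substituting the hypotheses into the gradient formulas of Theorem~\ref{critical_points_thm}, exactly as you do. Your final remark is slightly overcautious---the hypotheses are in fact a degenerate instance of the orthogonality condition (anything is orthogonal to the zero matrix)---but the direct substitution you give is the cleanest route and is precisely what the paper has in mind.
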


This result implies that points in parameter space where the activation function and its derivatives are zero can lead to sub-optimal stationary points. Note that if a zero misfit minimum is not possible, there may or may not be an actual global minimum (there will always be a global infimum), but since the misfit is not zero any such point will still fall into the second category. In what follows we characterize the optimization geometry of the objective function $F$ at global minima, and degenerate points of the activation function, i.e. points for which $\sigma(W_0X) = \sigma'(W_0X) = 0$.

\subsection{Zero misfit minima}
Suppose that for given data $X,Y$, there exists $W_1,W_0,\sigma$ such that $W_1\sigma(W_0X) = Y$. As was discussed in Theorem \ref{critical_points_thm}, such points correspond to a global minimum. In what follows we characterize Hessian nullspace at these points, and corresponding ill-posedness of the training problem. 

\begin{theorem}{Characterization of Hessian nullspace at global minimum.}
Given data $X,Y,\sigma $ suppose there exist weight matrices $W_1,W_0$ such that $Y = W_1\sigma(W_0X)$. Suppose further that $W_1$ and $\sigma(W_0X)$ are full rank, then the Hessian nullspace is characterized by directions $\widehat{W_1},\widehat{W_0}$ such that 
\begin{align}
	\bigg[\widehat{W_1} \sigma(W_0X) + W_1(\sigma'(W_0X) \circ \widehat{W_0} X) \bigg]  \perp \sigma(W_0X)^T\\
	\bigg[[\sigma'(W_0X) \circ (W_1^T\widehat{W_1} \sigma(W_0X))] + \nonumber \\
	 [\sigma(W_0X) \circ (W_1^TW_1(\sigma'(W_0X) \circ \widehat{W_0} X))]\bigg] \perp X^T.
\end{align}

In particular for any direction $\widehat{W_0}$, such that the directional derivative $\sigma'(W_0X)\circ \widehat{W_0} X$ is zero, the weight matrices
\begin{align}
	&\widehat{W_0} \nonumber \\
	 &\widehat{W_1} =  -W_1(\sigma'(W_0X) \circ \widehat{W_0} X) \sigma(W_0X)^T[\sigma(W_0X)\sigma(W_0X)^T]^{-1}
\end{align}
are in the nullspace of the Hessian matrix $\nabla^2 F(W_1,W_0)$
\end{theorem}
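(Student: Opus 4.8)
The plan is to characterize the Hessian nullspace by computing the Hessian--vector product as the directional (Gateaux) derivative of the gradient map $(W_1,W_0)\mapsto \nabla F$ evaluated at the zero-misfit minimum, and then identifying which directions $(\widehat{W_1},\widehat{W_0})$ annihilate it. Writing $R = W_1\sigma(W_0X)-Y$ for the residual, the gradient blocks from Theorem \ref{critical_points_thm} read $\nabla_{W_1}F = R\,\sigma(W_0X)^T$ and $\nabla_{W_0}F = [\sigma'(W_0X)\circ (W_1^T R)]X^T$. The structural fact I would exploit is that at a zero-misfit minimum $R=0$, so when I differentiate the gradient every term carrying an \emph{undifferentiated} factor of $R$ drops out, leaving only the terms acting on the directional derivative $\dot R$ of the residual. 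Since the Hessian is symmetric, its nullspace is exactly the set of directions for which both differentiated blocks vanish.

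First I would compute $\dot R$, the directional derivative of $R$ along $(\widehat{W_1},\widehat{W_0})$. Applying the product rule to $W_1\sigma(W_0X)$ together with the chain rule $\frac{d}{dt}\sigma((W_0+t\widehat{W_0})X)\big|_{t=0} = \sigma'(W_0X)\circ(\widehat{W_0}X)$ gives $\dot R = \widehat{W_1}\sigma(W_0X) + W_1(\sigma'(W_0X)\circ \widehat{W_0} X)$. Differentiating $\nabla_{W_1}F = R\,\sigma(W_0X)^T$ then produces $\dot R\,\sigma(W_0X)^T$ plus a term proportional to $R$; the latter vanishes at the minimum, and setting the former to zero yields exactly the first orthogonality condition $[\widehat{W_1}\sigma(W_0X)+W_1(\sigma'(W_0X)\circ\widehat{W_0}X)]\perp\sigma(W_0X)^T$.

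Differentiating the second block $\nabla_{W_0}F = [\sigma'(W_0X)\circ(W_1^TR)]X^T$ produces three contributions: one from differentiating $\sigma'(W_0X)$ (introducing $\sigma''$), one from the $\widehat{W_1}^T R$ piece of $\frac{d}{dt}(W_1^TR)$, and one from $W_1^T\dot R$. The first two are proportional to $R$ and so vanish at the minimum, leaving $[\sigma'(W_0X)\circ(W_1^T\dot R)]X^T$. Substituting $W_1^T\dot R = W_1^T\widehat{W_1}\sigma(W_0X) + W_1^TW_1(\sigma'(W_0X)\circ\widehat{W_0}X)$ and distributing the Hadamard product over the sum yields the second orthogonality condition. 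The two conditions together describe the full nullspace.

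For the explicit direction I would solve the first condition for $\widehat{W_1}$: it rearranges to $\widehat{W_1}\,\sigma(W_0X)\sigma(W_0X)^T = -W_1(\sigma'(W_0X)\circ\widehat{W_0}X)\sigma(W_0X)^T$, and the full-rank hypothesis on $\sigma(W_0X)$ makes $\sigma(W_0X)\sigma(W_0X)^T$ invertible, giving the stated closed form for $\widehat{W_1}$. In the distinguished case $\sigma'(W_0X)\circ\widehat{W_0}X=0$, this formula forces $\widehat{W_1}=0$, hence $\dot R = 0$, so both orthogonality conditions hold trivially and $(\widehat{W_1},\widehat{W_0})$ lies in the nullspace; this exhibits a nontrivial nullspace and thus the ill-posedness. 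The main obstacle I anticipate is careful bookkeeping: correctly tracking how the directional derivatives of $\sigma$ and $\sigma'$ interact with the Hadamard products, and verifying precisely that the $R$-proportional terms are the ones that vanish at the minimum, while keeping the $\widehat{W_1}$- and $\widehat{W_0}$-dependence organized enough to read off the two conditions cleanly.
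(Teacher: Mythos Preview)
Your approach is essentially the paper's: both observe that at a zero-misfit point the full Hessian collapses to the Gauss--Newton Hessian, and both read off the two orthogonality conditions from its action on $(\widehat{W_1},\widehat{W_0})$. The paper applies its pre-derived vectorized Gauss--Newton blocks (Lemma~\ref{gn_hessian_derivation}), while you compute the directional derivative of the gradient and drop the $R$-proportional terms; these are equivalent computations. One discrepancy worth flagging: your derivation yields $\sigma'(W_0X)$ as the outer Hadamard factor in \emph{both} summands of the second condition, whereas the theorem as stated has $\sigma(W_0X)$ in the second summand---your version is the one consistent with the $W_0$--$W_0$ Gauss--Newton block in Lemma~\ref{gn_hessian_derivation}, so be aware you will not reproduce the displayed formula verbatim. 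For the explicit nullspace direction, your verification is more direct than the paper's: you simply observe that $\sigma'(W_0X)\circ\widehat{W_0}X=0$ forces $\widehat{W_1}=0$ via the closed form, hence $\dot R=0$ and both conditions hold trivially. The paper instead strengthens the orthogonality conditions to equalities, substitutes one into the other, and invokes a $\sigma\neq\exp$ argument together with full rank of $W_1$ to arrive at the same constraint on $\widehat{W_0}$; your route bypasses that detour but proves the same sufficiency claim.
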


\begin{proof}
Since the misfit is zero, the Hessian is exactly the Gauss-Newton Hessian, which is derived in Lemma \ref{gn_hessian_derivation}. The matrices $\widehat{W_0},\widehat{W_1}$ are in the nullspace of the Hessian if
\begin{equation}
	\bigg[\widehat{W_1} \sigma(W_0X) + W_1(\sigma'(W_0X) \circ \widehat{W_0} X) \bigg]\sigma(W_0X)^T = 0 
\end{equation}
\begin{align}
	\bigg[[\sigma'(W_0X) \circ (W_1^T\widehat{W_1} \sigma(W_0X))] + \nonumber \\
	 [\sigma(W_0X) \circ (W_1^TW_1(\sigma'(W_0X) \circ \widehat{W_0} X))]\bigg]X^T = 0 .
\end{align}
For this to be the case we need that $[\widehat{W_1} \sigma(W_0X) + W_1(\sigma'(W_0X) \circ \widehat{W_0} X)] \perp \sigma(W_0X)^T$ and $[[\sigma'(W_0X) \circ (W_1^T\widehat{W_1} \sigma(W_0X))] + [\sigma(W_0X) \circ (W_1^TW_1(\sigma'(W_0X) \circ \widehat{W_0} X))]] \perp X^T$. The Hessian nullspace is fully characterized by points $\widehat{W_1}, \widehat{W_0}$ that satisfy these two orthogonality constraints. One way in which these constraints are satisfied is if 
\begin{align}
  	\widehat{W_1} \sigma(W_0X) = - W_1(\sigma'(W_0X)\circ \widehat{W_0} X)  \label{w1_gn_null_eq}\\
  	[\sigma'(W_0X)\circ (W_1^T\widehat{W_1} \sigma(W_0X))] + \nonumber \\
  	[\sigma(W_0X) \circ (W_1^TW_1(\sigma'(W_0X) \circ \widehat{W_0} X))] = 0 \label{w0_gn_null_eq}.
\end{align}  
Subsituting \eqref{w1_gn_null_eq} into \eqref{w0_gn_null_eq} we have 
\begin{equation}
	[-\sigma'(W_0X) + \sigma(W_0X)]\circ (W_1^TW_1(\sigma'(W_0X) \circ \widehat{W_0} X)) = 0.
\end{equation}
The first term is nonzero if $\sigma \neq \exp$, since $\sigma(W_0X)$ is assumed to be full rank. For the Hadamard product to be zero, the second term must be zero:
\begin{equation}
	W_1^TW_1(\sigma'(W_0X) \circ \widehat{W_0} X) = 0.
\end{equation}
This is accomplished when $W_1^TW_1 \perp \sigma'(W_0X) \circ \widehat{W_0} X$. Since $W_1$ is full rank this condition reduces to $\sigma'(W_0X) \circ \widehat{W_0} X = 0$. Suppose that $\widehat{W_0}$ satisfies this directional derivative constraint, then we can find a corresponding $\widehat{W_1}$ such that $\widehat{W_1},\widehat{W_0}$ are in the Hessian nullspace from \eqref{w1_gn_null_eq}:
\begin{align}
	&\widehat{W_1} = \nonumber \\
	 - &W_1 (\sigma'(W_0X) \circ \widehat{W_0} X)\sigma(W_0X)^T[\sigma(W_0X)\sigma(W_0X)^T]^{-1}.
\end{align}
Note that $\sigma(W_0X)\sigma(W_0X)^T \in \mathbb{R}^{r\times r}$ is invertible since $\sigma(W_0X)$ is assumed to be full rank.

\end{proof}

This result shows that the Hessian may have a nontrivial nullspace at
zero misfit global minima; in particular, if there are any local
directions $\widehat{W_0}$ satisfying the directional derivative
constraint $\sigma'(W_0X)\circ \widehat{W_0} X = 0$, then the Hessian
is guaranteed to have at least one zero eigenvalue. If the Hessian has
at least one zero eigenvalue, then the candidate global minimum
$W_1,W_0$ is not unique, and instead is on a manifold of global
minima. Global minima are in this case weak minima.  

This result is similar to the non-uniqueness of the linear network training problem, Equation \eqref{linear_ill_posedness}. However in this case the linear rank constraints are obfuscated by the nonlinear activation function, and, additionally the zeros of the activation function lead to more possibility for Hessian rank-deficiency and associated ill-posedness.

For weak global minima, regularization schemes that annihilate 
the Hessian nullspace while leaving the range space unscathed 
can be used to
make the training problem well-posed without biasing the
solution. 
Furthermore, such regularization
schemes
will accelerate the asymptotic convergence rates of second order
methods (Newton convergence deteriorates from quadratic to linear in
the presence of singular Hessians), thereby making them even more
attractive relative to first order methods. 

\subsection{Strict Saddle Points and Spurious Local Minima.}

As was shown in Theorem \ref{critical_points_thm} and Corollary
\ref{crit_pt_corollary}, there are stationary points where
the misfits are not zero. In this section we show that these points can be both strict saddle points as well as spurious local minima. 

Suppose the gradient is zero, but the misfit is nonzero. As was discussed in condition \ref{not_global_minimum} of Theorem \ref{critical_points_thm} such minima require orthogonality conditions for matrices that show up in the gradient. Corollary \ref{crit_pt_corollary} establishes that this result is achieved if $\sigma(W_0X) = \sigma'(W_0X) = 0$. Many activation functions such as ReLU, sigmoid, softmax, softplus, tanh have many points satisfying these conditions (or at least approximately satisfying these conditions, i.e. for small $\epsilon >0$, $\|\sigma'(W_0X)\|_F,\|\sigma(W_0X)\|_F \leq \epsilon$). Such stationary points are degenerate due to the activation functions. In what follows we show that while these points are likely to be strict saddles, it is possible that some of them have no directions of negative curvature and are thus spurious local minima. 

\begin{theorem}{Negative Curvature Directions at Degenerate Activation Stationary Points.}
Let $W_1$ be arbitrary and suppose that $W_0$ is such that $\sigma'(W_0X) = \sigma(W_0X) = 0$, negative curvature directions of the Hessian at such points are characterized by directions $\widehat{W_0}$ such that 
\begin{equation} \label{negative_curvature_w0hat}
	\sum_{k=1}^d \sum_{i=1}^r (\widehat{W_0}x^{(k)})_i^2 (\sigma''(W_0x^{(k)}))_i (W_1^Ty^{(k)})_i < 0.
\end{equation}
\end{theorem}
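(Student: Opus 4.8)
The plan is to read off negative curvature directly from the second directional derivative rather than by assembling the full Hessian matrix. For a fixed perturbation direction $(\widehat{W_1},\widehat{W_0})$, set $\phi(t) = F(W_1 + t\widehat{W_1}, W_0 + t\widehat{W_0})$; then $\phi''(0)$ is exactly the Hessian quadratic form along $(\widehat{W_1},\widehat{W_0})$, and the direction carries negative curvature iff $\phi''(0) < 0$. Writing $N(t) = (W_1 + t\widehat{W_1})\,\sigma((W_0 + t\widehat{W_0})X)$ for the network output, we have $\phi(t) = \tfrac{1}{2}\|Y - N(t)\|_F^2$, so that $\phi''(0) = \|N'(0)\|_F^2 - \langle Y - N(0),\, N''(0)\rangle$, where $\langle A,B\rangle = \text{tr}(A^TB)$ denotes the Frobenius inner product.

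First I would differentiate through the element-wise activation. Since $(W_0 + t\widehat{W_0})X = W_0X + t\,\widehat{W_0}X$ is affine in $t$, the chain rule gives $\tfrac{d}{dt}\sigma((W_0+t\widehat{W_0})X)|_{0} = \sigma'(W_0X)\circ\widehat{W_0}X$ and $\tfrac{d^2}{dt^2}\sigma((W_0+t\widehat{W_0})X)|_{0} = \sigma''(W_0X)\circ\widehat{W_0}X\circ\widehat{W_0}X$, whence $N'(0) = \widehat{W_1}\sigma(W_0X) + W_1(\sigma'(W_0X)\circ\widehat{W_0}X)$ and $N''(0) = 2\widehat{W_1}(\sigma'(W_0X)\circ\widehat{W_0}X) + W_1(\sigma''(W_0X)\circ\widehat{W_0}X\circ\widehat{W_0}X)$. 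Now I invoke the degeneracy hypothesis $\sigma(W_0X)=\sigma'(W_0X)=0$: this makes $N(0)=0$ so the residual is exactly $Y$, it makes $N'(0)=0$ since each of its terms carries a factor of $\sigma(W_0X)$ or $\sigma'(W_0X)$, and it collapses $N''(0)$ to the single surviving term $W_1(\sigma''(W_0X)\circ\widehat{W_0}X\circ\widehat{W_0}X)$. The positive-semidefinite Gauss--Newton contribution $\|N'(0)\|_F^2$ therefore vanishes identically, which is precisely why these points can harbor negative curvature: the only remaining piece is the residual-times-second-derivative term. Substituting leaves $\phi''(0) = -\langle W_1^TY,\ \sigma''(W_0X)\circ\widehat{W_0}X\circ\widehat{W_0}X\rangle$, and expanding this Frobenius inner product entrywise over the $r$ rows and $d$ columns, using $(W_1^TY)_{ik}=(W_1^Ty^{(k)})_i$ and $(\widehat{W_0}X)_{ik}=(\widehat{W_0}x^{(k)})_i$, produces the double sum in \eqref{negative_curvature_w0hat}. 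Notice that $\widehat{W_1}$ has dropped out entirely, so the condition constrains only $\widehat{W_0}$, consistent with the statement, and any $\widehat{W_1}$ completes a negative curvature direction.

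The step requiring the most care is the bookkeeping of the second derivative of the composed element-wise map, together with the single global sign. One must verify that differentiating $\sigma$ applied entrywise to $W_0X + t\widehat{W_0}X$ yields the Hadamard-squared factor $\widehat{W_0}X\circ\widehat{W_0}X$ weighted by $\sigma''$, with no stray cross terms, and then track the sign introduced by the residual $Y - N(0)$ so that the inequality in \eqref{negative_curvature_w0hat} is oriented correctly. Once the degeneracy is used to discard every term except the $\sigma''$ term, the identification with the stated sum over $i$ and $k$ is routine index manipulation, and the characterization follows: $\widehat{W_0}$ is a negative curvature direction precisely when the resulting quadratic form $\phi''(0)$ is negative.
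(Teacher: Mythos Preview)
Your approach is essentially the same as the paper's: both identify that under the hypothesis $\sigma(W_0X)=\sigma'(W_0X)=0$ every Gauss--Newton term and every off-diagonal second-derivative term vanishes, leaving only the $W_0$--$W_0$ ``non Gauss--Newton'' block, and both then evaluate the resulting quadratic form entrywise. The paper does this by quoting its Hessian-block lemmas in Kronecker form and contracting against $\text{vec}(\widehat{W_0})$; you do the equivalent computation more elementarily via the one-parameter function $\phi(t)$. The content is the same, and your presentation is arguably cleaner and more self-contained.

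There is, however, a sign slip at the very last step that you should not gloss over. Your own computation gives
\[
\phi''(0) \;=\; -\big\langle W_1^TY,\ \sigma''(W_0X)\circ\widehat{W_0}X\circ\widehat{W_0}X\big\rangle
\;=\; -\sum_{k=1}^d\sum_{i=1}^r (\widehat{W_0}x^{(k)})_i^2\,(\sigma''(W_0x^{(k)}))_i\,(W_1^Ty^{(k)})_i,
\]
so negative curvature ($\phi''(0)<0$) is equivalent to the double sum being \emph{positive}, not negative as written in \eqref{negative_curvature_w0hat}. The paper's own proof arrives at the same expression (it obtains $\sum_{k,i}(\widehat{W_0}x^{(k)})_i^2(\sigma''(W_0x^{(k)}))_i(W_1^T(W_1\sigma(W_0x^{(k)})-y^{(k)}))_i$ and then sets $\sigma(W_0X)=0$), so the discrepancy appears to be a sign error in the theorem statement itself, inherited by the subsequent corollaries. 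Your derivation is correct; you should flag the mismatch rather than assert that it ``produces the double sum in \eqref{negative_curvature_w0hat}'' with the inequality as stated.
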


\begin{proof}
Since $\sigma'(W_0X)$ all of the terms in the Gauss-Newton Hessian are zero (see Lemma \ref{gn_hessian_derivation}). Further, all of the off-diagonal non Gauss-Newton portions are also zero. In this case the only block of the Hessian that is nonzero is the non Gauss-Newton $W_0-W_0$ block (see Lemma \ref{ngn_hessian_derivation}). We proceed by analyzing an un-normalized Rayleigh quotient for this block in an arbitrary direction $\widehat{W_0}$. From Equation \ref{w0w0_block_ngn} we can compute the quadratic form:
\begin{align}
	&\text{vec}(\widehat{W_0})^T(\partial_{\text{vec}(W_0)}\partial_{\text{vec}(W_0)}\text{misfit})^T\text{misfit}\text{vec}(\widehat{W_0}) \nonumber \\
	= & \text{vec}(\widehat{W_0}X)^T\text{dvec}((W_1^T(W_1\sigma(W_0X) - Y)) \nonumber \\
	& \qquad \qquad \circ \sigma''(W_0X)) \text{vec}(\widehat{W_0}X) \nonumber \\
	= & \text{vec}(\widehat{W_0}X)^T\text{vec}(\big[(W_1^T(W_1\sigma(W_0X) - Y)) \nonumber \\
	& \qquad \qquad \circ \sigma''(W_0X) \circ \widehat{W_0}X\big]) 
\end{align}
Expanding this term in a sum we have:
\begin{equation}
	\sum_{k=1}^d\sum_{i=1}^r (\widehat{W_0}x^{(k)})^2_i (\sigma''(W_0x^{(k)}))_i (W_1^T(W_1\sigma(W_0x^{(k)}) - y^{(k)})_i
\end{equation}
The result follows noting that $\sigma(W_0X) = 0$.
\end{proof}

Directions $\widehat{W_0}$ that satisfy the negative curvature condition \eqref{negative_curvature_w0hat} are difficult to understand in their generality, since they depend on $X,Y$ and $\sigma''$.  We discuss some example sufficient conditions.

\begin{corollary}{Saddle point with respect to one data pair.}
Given $W_1,W_0$, and a strictly convex activation function $\sigma$, suppose that $\sigma'(W_0X) = \sigma(W_0X) = 0$. Suppose that there is a data pair with $x^{(k)} \neq 0$ such that at least one negative component of $W_1^Ty^{(k)}$. Then $W_1,W_0$ is a strict saddle point.
\end{corollary}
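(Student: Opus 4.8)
The plan is to produce an explicit direction $\widehat{W_0}$ for which the negative-curvature criterion \eqref{negative_curvature_w0hat} of the preceding theorem holds; since a point with $\sigma(W_0X)=\sigma'(W_0X)=0$ is already a stationary point by Corollary \ref{crit_pt_corollary}, exhibiting one direction of strictly negative curvature is exactly what certifies $(W_1,W_0)$ as a strict saddle.

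First I would exploit strict convexity of $\sigma$, which gives $\sigma''>0$ everywhere, hence $(\sigma''(W_0x^{(k)}))_i>0$ for every neuron index $i$ and data index $k$. This pins the sign of every weight appearing in the sum \eqref{negative_curvature_w0hat}: since the factor $(\widehat{W_0}x^{(k)})_i^2$ is nonnegative, the sign of the $(k,i)$ term is governed entirely by $(W_1^Ty^{(k)})_i$. By hypothesis there is a pair $(k^\ast,i^\ast)$ with $x^{(k^\ast)}\neq 0$ and $(W_1^Ty^{(k^\ast)})_{i^\ast}<0$, which is the term I want to make dominate.

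Next I would choose $\widehat{W_0}$ to have a single nonzero row, namely row $i^\ast$ set equal to $v^T$ for a vector $v\in\mathbb{R}^n$ to be selected. With this choice $(\widehat{W_0}x^{(k)})_i=0$ for all $i\neq i^\ast$, and \eqref{negative_curvature_w0hat} collapses to the single-neuron quadratic form $\sum_{k}(v^Tx^{(k)})^2(\sigma''(W_0x^{(k)}))_{i^\ast}(W_1^Ty^{(k)})_{i^\ast}=v^TMv$, where $M=\sum_k (\sigma''(W_0x^{(k)}))_{i^\ast}(W_1^Ty^{(k)})_{i^\ast}\,x^{(k)}(x^{(k)})^T$. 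To isolate the single data pair named in the statement, I would take $v$ orthogonal to every $x^{(k)}$ with $k\neq k^\ast$ while keeping $v^Tx^{(k^\ast)}\neq 0$; then only the $k^\ast$ term survives and equals $(v^Tx^{(k^\ast)})^2(\sigma''(W_0x^{(k^\ast)}))_{i^\ast}(W_1^Ty^{(k^\ast)})_{i^\ast}<0$, establishing \eqref{negative_curvature_w0hat}.

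The main obstacle is precisely this isolation step: a single row $v^T$ of $\widehat{W_0}$ feeds \emph{every} data point through neuron $i^\ast$, so the cross-contributions from the remaining pairs $k\neq k^\ast$ must be controlled. Choosing $v$ orthogonal to $\mathrm{span}\{x^{(k)}:k\neq k^\ast\}$ with $v^Tx^{(k^\ast)}\neq 0$ is possible exactly when $x^{(k^\ast)}$ is not in that span, which is automatic for a single data pair and is what the statement's title ``with respect to one data pair'' signals. If one cannot decouple $x^{(k^\ast)}$ in this way, the form $M$ need not be indefinite, since a positive factor $(W_1^Ty^{(k)})_{i^\ast}>0$ on a collinear $x^{(k)}$ can overwhelm the negative term; I would therefore either restrict to the isolable one-pair setting or strengthen the hypothesis so that $M$ is guaranteed a negative eigenvalue. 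I expect the algebraic collapse of \eqref{negative_curvature_w0hat} to the surviving term to be routine, with essentially all of the subtlety concentrated in this decoupling of the chosen data pair from the rest.
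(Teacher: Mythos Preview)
Your route is more elaborate and, in one respect, more careful than the paper's. The paper chooses $\widehat{W_0}$ with a \emph{single nonzero entry} $(\widehat{W_0})_{i^\ast j}=1$ (picking any $j$ with $x^{(k^\ast)}_j\neq 0$) rather than a full nonzero row $v^T$, and then simply displays
\[
\sum_{i=1}^r(\widehat{W_0}x^{(k^\ast)})_i^2(\sigma''(W_0x^{(k^\ast)}))_i(W_1^Ty^{(k^\ast)})_i
=(x^{(k^\ast)}_j)^2(\sigma''(W_0x^{(k^\ast)}))_{i^\ast}(W_1^Ty^{(k^\ast)})_{i^\ast}<0,
\]
i.e.\ only the inner sum for the fixed data index $k^\ast$, without addressing the outer sum over $k$ in \eqref{negative_curvature_w0hat}. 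The cross-contribution issue you flag is therefore present in the paper's argument too: a single-entry $\widehat{W_0}$ still gives $(\widehat{W_0}x^{(k)})_{i^\ast}=x^{(k)}_j$ for \emph{every} $k$, so the full quadratic form is $\sum_k (x^{(k)}_j)^2(\sigma''(W_0x^{(k)}))_{i^\ast}(W_1^Ty^{(k)})_{i^\ast}$, whose sign is not controlled without further hypotheses. Your orthogonalization of $v$ against $\{x^{(k)}:k\neq k^\ast\}$ is a genuine attempt to close this gap, and your caveat that it requires $x^{(k^\ast)}\notin\mathrm{span}\{x^{(k)}:k\neq k^\ast\}$ is exactly the right diagnosis. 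The paper's simpler construction buys brevity at the cost of leaving the same obstacle unaddressed, presumably reading the corollary's title literally as the one-data-pair regime.
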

\begin{proof}
If $(W_1^Ty^{(k)})_i <0$, and the $x^{(k)}_j \neq 0$, then the direction $\widehat{W_0}_{ij} = 1$ with all other components zero defines a direction of negative curvature.
\begin{align}
	\sum_{i=1}^r&(\widehat{W_0}x^{(k)})_i^2(\sigma''(W_0x^{(k)}))_i (W_1^Ty^{(k)})_i \nonumber \\
	= &(x^{(k)})_j^2(\sigma''(W_0x^{(k)}))_j(W_1^Ty^{(k)})_j <0.
\end{align}
\end{proof}

\begin{corollary}
Given $W_1,W_0$ and a strictly convex function $\sigma$ and all elements of one row of $W_1^TY$ are negative then $W_1,W_0$ is a strict saddle point.
\end{corollary}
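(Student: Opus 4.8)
The plan is to reduce this to the negative-curvature criterion \eqref{negative_curvature_w0hat} established in the preceding theorem, which still applies because we remain at a degenerate activation stationary point ($\sigma'(W_0X) = \sigma(W_0X) = 0$). The whole task is then to exhibit a single direction $\widehat{W_0}$ that makes the quadratic form strictly negative.

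First I would let $i$ denote the index of the row of $W_1^TY$ all of whose entries are negative, so that $(W_1^Ty^{(k)})_i < 0$ for every data index $k = 1,\dots,d$. Since $\sigma$ is strictly convex we have $\sigma'' > 0$ pointwise, hence $(\sigma''(W_0x^{(k)}))_i > 0$ for all $k$, independently of the value of $W_0x^{(k)}$. I would then choose $\widehat{W_0}$ to be supported entirely on row $i$: set every other row to zero and let the $i$-th row be $v^T$ for some $v \in \mathbb{R}^n$. Then $(\widehat{W_0}x^{(k)})_j = 0$ for $j \neq i$ and $(\widehat{W_0}x^{(k)})_i = v^Tx^{(k)}$, so the double sum in \eqref{negative_curvature_w0hat} collapses to the single sum $\sum_{k=1}^d (v^Tx^{(k)})^2 (\sigma''(W_0x^{(k)}))_i (W_1^Ty^{(k)})_i$.

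Each summand is now a product of a nonnegative square $(v^Tx^{(k)})^2$, a strictly positive curvature factor $(\sigma''(W_0x^{(k)}))_i$, and a strictly negative factor $(W_1^Ty^{(k)})_i$, so every term is $\le 0$ and the total is $\le 0$. The only point requiring care is promoting this to a strict inequality: I need at least one term with $v^Tx^{(k)} \neq 0$. Provided the data is not identically zero, I would simply take $v = x^{(k_0)}$ for some $x^{(k_0)} \neq 0$, giving $v^Tx^{(k_0)} = \|x^{(k_0)}\|^2 > 0$ and forcing that summand to be strictly negative, so the whole sum is negative and $\widehat{W_0}$ is a direction of negative curvature, proving $W_1,W_0$ is a strict saddle.

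The reason the \emph{uniform}-sign hypothesis on an entire row is the natural one here, rather than the single-pair hypothesis of the previous corollary, is precisely that spreading $\widehat{W_0}$ over all data pairs squares the factors $v^Tx^{(k)}$: their signs cannot cancel across different $k$, so it is the signs of the fixed factors $(W_1^Ty^{(k)})_i$ that must all agree for the sum to be signed. I expect the only genuine obstacle to be the mild non-degeneracy check that the data is not entirely zero so that a usable $v$ exists; aside from that, the argument is a direct specialization of the negative-curvature theorem.
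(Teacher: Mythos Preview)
Your proposal is correct and follows essentially the same approach as the paper: restrict $\widehat{W_0}$ to have all rows zero except the $i$-th, so that the double sum in \eqref{negative_curvature_w0hat} collapses to a single sum over $k$ in which every term has the same (nonpositive) sign. The paper's proof is actually terser and slightly looser --- it asserts that \emph{any} nonzero $\widehat{W_0}$ supported on row $i$ yields a direction of negative curvature --- whereas you correctly observe that one still needs some $v^Tx^{(k)}\neq 0$ to get a strict inequality, and you supply the explicit choice $v=x^{(k_0)}$; this extra care is a mild improvement over the paper's version.
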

\begin{proof}
Let the $i^{th}$ row of $W_1^TY$ satisfy this condition, then any choice of $\widehat{W_0} \neq 0$ such that all rows other than $i$ are zero will define a direction of negative curvature. 
\end{proof}

These conditions are rather restrictive, but demonstrate the nature of existence of negative curvature directions. As was stated before, the most general condition for a strict saddle is the existence of $\widehat{W_0}$ that satisfies Equation \eqref{negative_curvature_w0hat}. We conjecture that such an inequality shouldn't be hard to satisfy, but as it is a nonlinear inequality finding general conditions for the existence of such $\widehat{E}$ is difficult. We have the following result about how the zeroes of the activation function and its derivatives can lead to spurious local minima. 

\begin{corollary}
For a given $W_0$, if $\sigma(W_0X) = \sigma'(W_0X) = \sigma''(W_0X) = 0$, then the Hessian at this point is exactly zero and this point defines a spurious local minimum.
\end{corollary}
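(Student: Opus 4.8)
The plan is to exploit the additive decomposition of the Hessian into its Gauss--Newton and non-Gauss--Newton parts, as supplied by Lemmas \ref{gn_hessian_derivation} and \ref{ngn_hessian_derivation}, and to observe that the three vanishing hypotheses $\sigma(W_0X)=\sigma'(W_0X)=\sigma''(W_0X)=0$ progressively annihilate every block of this decomposition. First I would confirm that the point is even a first order stationary point: this is immediate from Corollary \ref{crit_pt_corollary}, which guarantees that any $W_0$ with $\sigma(W_0X)=\sigma'(W_0X)=0$ is stationary for every $W_1$, so $\nabla F(W_1,W_0)=0$ here regardless of the choice of $W_1$.

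Next I would dispatch the Gauss--Newton Hessian. Every entry of the Gauss--Newton Hessian is assembled from the Jacobian of the residual with respect to the parameters, and by Lemma \ref{gn_hessian_derivation} that Jacobian is built entirely out of $\sigma(W_0X)$ and $\sigma'(W_0X)$. Since both factors are zero, the full Gauss--Newton Hessian is the zero matrix. For the non-Gauss--Newton part I would invoke exactly the block analysis carried out in the proof of the preceding theorem on negative curvature directions: when $\sigma'(W_0X)=\sigma(W_0X)=0$, all off-diagonal non-Gauss--Newton blocks vanish and the only surviving block is the non-Gauss--Newton $W_0$--$W_0$ block, whose quadratic form in an arbitrary direction $\widehat{W_0}$ is
\begin{equation}
	\sum_{k=1}^d\sum_{i=1}^r (\widehat{W_0}x^{(k)})^2_i (\sigma''(W_0x^{(k)}))_i (W_1^T(W_1\sigma(W_0x^{(k)}) - y^{(k)}))_i .
\end{equation}
Here the additional hypothesis $\sigma''(W_0X)=0$ forces each summand to vanish for every $\widehat{W_0}$, so this last block is also identically zero. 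Combining the two parts, the complete Hessian $\nabla^2 F(W_1,W_0)$ is the zero matrix.

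To finish, I would read off the classification from the paper's own definitions. A zero Hessian has all eigenvalues equal to zero, hence all nonnegative, so by the stated definition the point is a local minimum. Moreover $\sigma(W_0X)=0$ gives $W_1\sigma(W_0X)=0$, so the residual equals $Y$; as long as the data is nontrivial ($Y\neq 0$) the misfit is nonzero, and the point is therefore a \emph{spurious} local minimum in the sense used throughout the paper.

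The main obstacle is conceptual rather than computational: a vanishing Hessian is a genuinely degenerate second order condition, and without appeal to the preceding theorem's isolation of the surviving non-Gauss--Newton block one might worry that higher order terms reintroduce negative curvature. The content of the result is precisely that the third order degeneracy $\sigma''(W_0X)=0$ suppresses the one block that carried all the curvature information in the negative-curvature analysis, so that the point is flat to second order and is classified as a (spurious) local minimum under the eigenvalue-based definition adopted in the paper.
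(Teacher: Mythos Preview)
Your proposal is correct and follows essentially the same route the paper implicitly takes: the corollary is stated without proof because it is an immediate consequence of the preceding theorem's block analysis, which already isolated the non-Gauss--Newton $W_0$--$W_0$ block as the sole surviving Hessian block when $\sigma(W_0X)=\sigma'(W_0X)=0$, and that block carries the factor $\sigma''(W_0X)$. Your write-up simply makes this reasoning explicit, together with the stationarity check via Corollary~\ref{crit_pt_corollary} and the classification via the paper's eigenvalue-based definition of a local minimum.
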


Such points exist for functions like ReLU, sigmoid, softmax, softplus etc. Any activation function that has large regions where it is zero (or near zero) will have such points. The question is then, how common are they? For the aforementioned functions, the function and its derivatives are zero or near zero when the argument of the function is sufficiently negative. For these functions, and a given tolerance $\epsilon>0$ there exists a constant $C\leq 0$ such that for all $\xi < C$, $\sigma(\xi) \leq \epsilon$, $\sigma'(\xi) \leq \epsilon$ and $\sigma''(\xi) \leq \epsilon$. For ReLU (which does not have any derivatives at zero) $C = \epsilon = 0$. In one dimension this condition is true for roughly half of the real number line for each of these functions. For the condition to be true for a vector it must be true elementwise. So for the condition
\begin{equation}
	\sigma(W_0x^{(k)}) \leq \epsilon \text{ and } \sigma'(W_0x^{(k)}) \leq \epsilon \text{ and }  \sigma''(W_0x^{(k)}) \leq \epsilon
\end{equation}
to hold for a given input datum $x^{(k)}$; the encoder array must map each component of $x^{(k)}$ into the strictly negative orthant of $\mathbb{R}^r$. The probability of drawing a mean zero Gaussian random vector in $\mathbb{R}^r$ that is in the strictly negative orthant is $2^{-r}$. Furthermore for this condition to hold for all of $W_0X$ means it must be true for each column of the matrix $W_0X$. The probability of drawing a mean zero Gaussian random matrix in $\mathbb{R}^{r \times d}$ such that each column resides in the strictly negative orthant is $2^{-rd}$. In practice the linearly encoded input data matrix $W_0X$ is unlikely to have the statistical properties of a mean zero Gaussian, but this heuristic demonstrates that these degenerate points may be improbable to encounter. If the Hessian is exactly zero, one needs third order information to move in a descent direction \cite{AnandkumarGe2016}.

\section{Extension to Deep Networks} \label{deep_section}

In this section we briefly discuss the general conditions for stationary points of a dense neural network. We consider the parameterization. In this case the weights for an $N$ layer network are $[W_0,W_1,\cdots,W_N]$, where $W_0 \in \mathbb{R}^{r_0 \times m}$, $W_N \in \mathbb{R}^{n \times r_{N-1}}$, and all other $W_j \in \mathbb{R}^{r_1 \times r_0}$. The activation functions $\sigma_j$ are arbitrary. The network parameterization is
\begin{equation} \label{deep_nn_form}
	W_N\sigma_N(W_{N-1}\sigma_{N_1}( \cdots \sigma_1(W_0X)\cdots )).
\end{equation}
We have the following general result about first order stationary points of deep neural networks.

\begin{theorem}{Stationary points of deep dense neural networks}
The blocks of the gradient of the least squares loss function for the deep neural network (Equation \eqref{deep_nn_form}) are as follows:
\begin{align}
\nabla_{W_j}F(\mathbf{W}) = \bigg[\sigma'_{j+1}(W_j\sigma_j \cdots \sigma_1(W_0X)\cdots) \circ \nonumber \\
								\big(W_{j+1}^T\big(\sigma_{j+2}'(W_{j+1}\sigma_j\cdots \sigma_1(W_0X)\cdots) \circ \nonumber \\
								\cdots \circ \big( W_{N-1}^T\big(\sigma'_N(W_{N-1} \cdots \sigma_1(W_0X)\cdots) )	 \cdots \nonumber \\
								\circ \big(W_N^T(W_N\sigma_N(W_{N-1} \cdots \sigma_1(W_0X)\cdots) - Y)\big)	\cdots\big)\big)\big)\bigg] \nonumber \\
								\sigma_j(W_j \cdots \sigma_1(W_0X))^T.
\end{align}
Stationary points of the loss function are characterized by two main cases:
\begin{enumerate}
\item The misfit is exactly zero. If such points are possible, then these points correspond to local minima

\item For each block the following orthogonality condition holds:
\begin{align} \label{deep_orth_requirement}
&\bigg[\sigma'_{j+1}(W_j\sigma_j \cdots \sigma_1(W_0X)\cdots) \circ \nonumber \\
&\big(W_{j+1}^T\big(\sigma_{j+2}'(W_{j+1}\sigma_j\cdots \sigma_1(W_0X)\cdots) \circ \nonumber \\
&\cdots \circ \big( W_{N-1}^T\big(\sigma'_N(W_{N-1} \cdots \sigma_1(W_0X)\cdots) )	 \cdots \nonumber \\
&\circ \big(W_N^T(W_N\sigma_N(W_{N-1} \cdots \sigma_1(W_0X)\cdots) - Y)\big)	\cdots\big)\big)\big)\bigg] \nonumber \\
& \perp \sigma_j(W_j \cdots \sigma_1(W_0X))^T
\end{align}
\end{enumerate}
\end{theorem}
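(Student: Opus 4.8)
The plan is to obtain the gradient blocks by a recursive application of the chain rule (i.e.\ backpropagation), generalizing the shallow computation of Theorem \ref{critical_points_thm}, and then to read off the stationarity conditions by setting each block to zero. First I would name the forward pass: writing $a_1 = \sigma_1(W_0X)$ and $a_k = \sigma_k(W_{k-1}a_{k-1})$ for $k = 2,\dots,N$, the network output is $W_Na_N$, the loss is $F(\mathbf{W}) = \tfrac{1}{2}\|Y - W_Na_N\|_F^2$, and the residual is $R = W_Na_N - Y$. Each $W_j$ enters the output through the linear map at its own layer and through every nonlinearity and weight that follows it, so $\nabla_{W_j}F$ is built by propagating $R$ backward from layer $N$ down to layer $j$.

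The key step is to track the two operations that alternate along the network. Left-multiplication by a weight $W_k$ on the forward pass contributes a factor $W_k^T$ on the backward pass, while each element-wise nonlinearity $\sigma_k$ contributes a Hadamard product with $\sigma_k'$ evaluated at the corresponding pre-activation; this is the origin of both the $W_k^T$ factors and the $\sigma_k'$ terms in the stated formula. Starting from the outermost factor $W_N^T R$ and alternately taking a Hadamard product with the appropriate $\sigma_k'$ and left-multiplying by the appropriate $W_k^T$, continuing down to layer $j$, produces precisely the bracketed backpropagated error; right-multiplying by the transpose of the activation feeding into layer $j$, namely $\sigma_j(\cdots)^T$ (and $X^T$ when $j=0$), yields $\nabla_{W_j}F$. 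As a sanity check I would confirm that $N=1$ recovers the two blocks of Theorem \ref{critical_points_thm}.

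With the gradient in hand the stationarity analysis is immediate. A point is a first order stationary point exactly when every block vanishes. If the misfit is exactly zero then $R = W_N\sigma_N(\cdots) - Y = 0$; since $R$ sits at the core of every backpropagated error through the factor $W_N^T R$, each $\nabla_{W_j}F$ vanishes automatically, and because $F \geq 0$ with $F = 0$ attained at such a point, it is a global and hence local minimum. When $R \neq 0$, the condition $\nabla_{W_j}F = 0$ for each $j$ forces the bracketed backpropagated error at layer $j$ to be orthogonal to $\sigma_j(\cdots)^T$, which is exactly the orthogonality requirement \eqref{deep_orth_requirement}.

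I expect the main obstacle to be bookkeeping rather than any conceptual difficulty: keeping the nested indices straight and verifying that the alternation of $W_k^T$ factors and $\sigma_k'$ Hadamard products lands in the exact nesting order written in the statement. The cleanest way to control this is to carry out the backward recursion in vectorized form, representing each linear layer by an $I_d \otimes W_k$ Jacobian and each nonlinearity by the diagonal $\text{diagvec}(\sigma_k'(\cdot))$, so that every step is an honest matrix product, and only at the end unvectorizing back to the Hadamard-product form. A downward recursion from the output layer, i.e.\ induction on $N-j$ with base case $\nabla_{W_N}F = R\,\sigma_N(\cdots)^T$, then delivers the general formula.
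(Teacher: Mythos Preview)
Your proposal is correct and is essentially the paper's own argument: the paper defers the gradient computation to Lemma \ref{deep_nn_grad_lemma}, which is proved exactly as you outline---iterated chain rule in vectorized form, with each layer contributing an $[I_d \otimes W_k]$ Jacobian and each nonlinearity a $\text{diagvec}(\sigma_k'(\cdot))$ factor, then unvectorized to the Hadamard form---after which the two stationarity cases are read off just as you describe.
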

This result follows from Lemma \ref{deep_nn_grad_lemma}. There are many different conditions on the weights and activation functions that will satisfy the orthogonality requirement in Equation \eqref{deep_orth_requirement}. One specific example is analogous to the condition in Corollary \ref{crit_pt_corollary}.

\begin{corollary}
Any weights $[W_0,\dots,W_{N-1}]$ such that 
\begin{align} 
	&\sigma_N(W_{N-1}\sigma_{N - 1}( \cdots \sigma_1(W_0X)\cdots )) = 0 \label{deep_nn_zero}\\
	&\sigma'_N(W_{N-1}\sigma_{N - 1}( \cdots \sigma_1(W_0X)\cdots )) = 0 \label{deep_deriv_nn_zero}
\end{align}
correspond to a first order stationary point for any $W_N$. 
\end{corollary}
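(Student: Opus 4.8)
The plan is to show that every gradient block $\nabla_{W_j}F$ vanishes, working directly from the gradient formula in the preceding theorem (which itself follows from Lemma \ref{deep_nn_grad_lemma}). First I would abbreviate the pre-activation argument of the final nonlinearity by $Z = W_{N-1}\sigma_{N-1}(\cdots\sigma_1(W_0X)\cdots)$, so that the two hypotheses read simply $\sigma_N(Z) = 0$ and $\sigma'_N(Z) = 0$. The conclusion then splits naturally into the outermost block $\nabla_{W_N}F$ and the remaining blocks $\nabla_{W_j}F$ for $j < N$, with the two hypotheses doing the work in these two cases respectively.

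The outermost block is the direct analogue of the shallow $\nabla_{W_1}$ computation of Theorem \ref{critical_points_thm}: it reduces to $(W_N\sigma_N(Z) - Y)\sigma_N(Z)^T$, since no later layers contribute $\sigma'$ factors. Because $\sigma_N(Z) = 0$, the trailing factor $\sigma_N(Z)^T$ is the zero matrix, so this block vanishes for \emph{any} choice of $W_N$ (the surviving misfit $-Y$ is simply annihilated by the zero factor).

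For the remaining blocks with $j < N$, the key observation is that the factor $\sigma'_N(Z)$ sits at the very bottom of the recursively nested chain-rule expression, Hadamard-multiplied against $W_N^T(W_N\sigma_N(Z) - Y)$. Since $\sigma'_N(Z) = 0$, this innermost Hadamard product is already the zero matrix. I would then propagate this zero outward one layer at a time: moving out by one level applies either a left-multiplication by some $W_k^T$ or a Hadamard product with some $\sigma'_k(\cdots)$, and each of these operations sends the zero matrix to the zero matrix. Hence the entire bracketed factor of $\nabla_{W_j}F$ is zero, and the final right-multiplication by $\sigma_j(\cdots)^T$ leaves it zero, again independently of $W_N$. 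Combining the two cases, all blocks of $\nabla F$ vanish, so $[W_0,\dots,W_{N-1}]$ with arbitrary $W_N$ is a first order stationary point.

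The argument is essentially bookkeeping rather than analysis. The only point requiring care is to state the zero-propagation through the nested expression cleanly; I would do this by a short induction on the layer index counting inward from $N$, so as to avoid writing out the full nested product. The mild subtlety worth flagging explicitly is that the two hypotheses play distinct and complementary roles — $\sigma_N(Z)=0$ is what kills $\nabla_{W_N}F$, while $\sigma'_N(Z)=0$ is what kills every deeper block — and in both cases the vanishing is manifestly independent of $W_N$, which is exactly what licenses the ``for any $W_N$'' in the statement.
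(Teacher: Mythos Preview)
Your proposal is correct and matches the paper's approach exactly: the paper's argument is simply that $\sigma_N(Z)=0$ appears as a factor in the $W_N$ block while $\sigma'_N(Z)=0$ appears via a Hadamard product in every other block through the chain rule. Your write-up is more detailed than the paper's one-sentence justification, but the structure and the two-case split (with each hypothesis handling its respective case) are identical.
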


This is the case since the term that is zero in Equation \eqref{deep_nn_zero} shows up in the $W_N$ block of the gradient, and the term that is zero in Equation \eqref{deep_deriv_nn_zero} shows up in every other block of the gradient via an Hadamard product due to the chain rule.

Analysis similar to that in Section \ref{shallow_section} can be
carried out to establish conditions for Hessian rank deficiency at
zero misfit minima and corresponding ill-posedness of the training
problem in a neighborhood, as well as analysis that may establish
conditions for saddle points and spurious local minima. Due to limited
space we do not pursue such analyses, but expect similar
results. Specifically the last activation function and its derivatives
seem to be critical in understanding the characteristics of stationary
points, both their existence and Hessian rank deficiency. If the
successive layer mappings prior to the last layer map
$W_{N-1}\sigma_{N-1}(\dots \sigma_1(W_0X))$ into the zero set of the
last activation and its derivatives then we believe spurious local
minima are possible.

\section{Conclusion}

For dense nonlinear neural networks, we have derived expressions
characterizing the nullspace of the Hessian in the vicinity of global
minima.
These can be used to design  regularization operators that target the
specific nature of ill-posedness of the training problem. 
When a candidate stationary point is a strict saddle,
appropriately-designed optimization algorithms will escape it
eventually (how fast they escape will depend on how negative the most
negative eigenvalue of the Hessian is). The analysis in this paper
shows that when the gradient is small, it can be due to an accurate
approximation of the mapping $X \mapsto Y$, or it can be due to the
orthogonality condition, Equation \eqref{not_global_minimum}. Spurious
local minima can be identified easily, since $\|Y -
W_1\sigma(W_0X)\|_F$ will be far from zero. Whether or not such points
are strict saddles or local minima is harder to know specifically
since this can depend on many different factors, such as the zeros of
the activation function and its derivatives. Such points can be
escaped quickly using Gaussian random noise
\cite{JinChiGeEtAl2017}. When in the vicinity of a strict saddle point
with a negative curvature direction that is large relative to other
eigenvalues of the Hessian, randomized methods can be used to identify
negative curvature directions and escape the saddle point at a cost
of a small number of neural network evaluations
\cite{OLearyRoseberryAlgerGhattas2019}.





\appendix

\section{Shallow Dense Neural Network Derivations} \label{shallow_appendix}

\subsection{Derivation of gradient} \label{derivation_of_grad}
Derivatives are taken in vectorized form. In order to simplify notation we use the following:
\begin{align}
	F(W_1,W_0) &= \frac{1}{2}\text{misfit}^T\text{misfit} \nonumber \\
	\text{misfit} &= \text{vec}(Y-W_1\sigma(W_0X)).
\end{align}

In numerator layout partial differentials with respect to a vectorized matrix $X$ are as follows:
\begin{equation} \label{numerator_layout_misfit_product}
	\partial_{\text{vec}(X)}\bigg(\frac{1}{2}\text{misfit}^T\text{misfit}\bigg) = (\partial_{\text{vec}(X)}\text{misfit})^T\text{misfit}.
\end{equation}
First we have a Lemma about the derivative of the activation function with respect to the encoder weight matrix.

\begin{lemma}\label{diagvec_derivative_lemma}
Suppose $W_0 \in \mathbb{R}^{r \times n}, X \in \mathbb{R}^{n \times d}$ and $\sigma$ is applied elementwise to the matrix $W_0X$, then
\begin{equation}
	\partial_{\text{vec}(W_0)}\text{vec}(\sigma(W_0X)) = \text{diagvec}(\sigma'(W_0X))[X^T\otimes I_r].
\end{equation}
\end{lemma}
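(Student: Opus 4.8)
The plan is to factor the map $\text{vec}(W_0) \mapsto \text{vec}(\sigma(W_0X))$ as the composition of a linear inner map followed by an elementwise nonlinearity, and then apply the chain rule in vectorized (numerator-layout) form. Write $g(W_0) = W_0X$ for the linear inner map and let $\sigma$ act componentwise afterward, so that $\text{vec}(\sigma(W_0X)) = \sigma(\text{vec}(W_0X))$, where on the right $\sigma$ is understood to act componentwise on the vector $\text{vec}(W_0X) \in \mathbb{R}^{rd}$. This commuting of $\text{vec}$ with the elementwise application of $\sigma$ is immediate, since vectorization merely reindexes entries while $\sigma$ acts entrywise.

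First I would handle the linear inner map. Using the standard Kronecker vectorization identity $\text{vec}(AZB) = (B^T\otimes A)\text{vec}(Z)$ with $A = I_r$, $Z = W_0$, and $B = X$, we obtain $\text{vec}(W_0X) = (X^T\otimes I_r)\text{vec}(W_0)$. Since this is linear in $\text{vec}(W_0)$, its Jacobian is exactly $\partial_{\text{vec}(W_0)}\text{vec}(W_0X) = X^T\otimes I_r \in \mathbb{R}^{rd\times rn}$.

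Next I would compute the Jacobian of the elementwise nonlinearity. For any vector $z$, the componentwise map $z \mapsto \sigma(z)$ has Jacobian $\text{diag}(\sigma'(z))$, because $\partial\, \sigma(z)_i/\partial z_j = \sigma'(z_i)\delta_{ij}$. Taking $z = \text{vec}(W_0X)$ gives $\text{diag}(\sigma'(\text{vec}(W_0X)))$. Since $\sigma'$ also acts entrywise and $\text{vec}$ only reindexes, $\sigma'(\text{vec}(W_0X)) = \text{vec}(\sigma'(W_0X))$, so this Jacobian equals $\text{diag}(\text{vec}(\sigma'(W_0X))) = \text{diagvec}(\sigma'(W_0X))$ by the definition of the diagvec operation.

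Finally I would assemble the result by the chain rule in numerator layout, multiplying the outer Jacobian by the inner one to get $\partial_{\text{vec}(W_0)}\text{vec}(\sigma(W_0X)) = \text{diagvec}(\sigma'(W_0X))(X^T\otimes I_r)$, which is the claimed formula. The only real obstacle is bookkeeping: one must keep the numerator-layout convention consistent so the factors multiply left-to-right in the correct order, and verify dimensional compatibility, namely $\text{diagvec}(\sigma'(W_0X)) \in \mathbb{R}^{rd\times rd}$ multiplying $X^T\otimes I_r \in \mathbb{R}^{rd\times rn}$ to yield an $rd \times rn$ Jacobian. Both are routine once the $\text{vec}$/$\sigma$ commutation and the Kronecker identity are in hand.
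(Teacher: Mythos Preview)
Your proposal is correct and uses essentially the same ingredients as the paper: the commutation of $\text{vec}$ with the elementwise $\sigma$, the diagonal Jacobian of an elementwise map, and the identity $\text{vec}(W_0X) = (X^T\otimes I_r)\text{vec}(W_0)$. The only cosmetic difference is that the paper packages these via a directional-derivative (first-order expansion in $h$) computation rather than an explicit chain-rule factorization, but the underlying argument is the same.
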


\begin{proof}
We use the limit definition of the derivative to derive this result. Let $h\in \mathbb{R}^{r\times n}$ be arbitrary. In the limit as $h \rightarrow 0 \in \mathbb{R}^{r \times n}$ we have the following:
\begin{equation}
	\text{vec}(\sigma((W_0+h)X) - \sigma(W_0X)) = \partial_{\text{vec}(W_0)}(\text{vec}(\sigma(W_0X)))\text{vec}(h)
\end{equation}
Expanding this term and noting that $\text{vec}(\sigma(W_0X)) = \sigma(\text{vec}(W_0X))$, as well as $\text{vec}(A)\circ \text{vec}(B) = \text{diagvec}(A)\text{vec}(B)$, we have:
\begin{align}
&\sigma(\text{vec}((W_0+h)X)) - \sigma(\text{vec}(W_0X)) \nonumber \\
= &\sigma(\text{vec}(W_0X - hX)) - \sigma(\text{vec}(W_0X)) \nonumber \\
= & \text{vec}(\sigma'(W_0X)) \circ \text{vec}(hX) \nonumber \\
= & \text{diagvec}(\sigma'(W_0X)) \text{vec}(hX) \nonumber \\
= & \text{diagvec}(\sigma'(W_0X))[X^T \otimes I_r]\text{vec}(h).
\end{align}
The result follows.
\end{proof}
Now we can derive the gradients of the objective function $F(W_1,W_0)$.

\begin{lemma} \label{derivation_of_grad_lemma}
The gradients of the objective function are given by
\begin{align}
	\nabla_{W_1}F(W_1,W_0) &= (W_1\sigma(W_0X) - Y)\sigma(W_0X)^T \\
	\nabla_{W_0}F(W_1,W_0) &= [\sigma'(W_0X)\circ (W_1^T(W_1\sigma(W_0X) - Y))]X^T.
\end{align}
\end{lemma}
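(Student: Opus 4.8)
The plan is to compute each block of the gradient by the vectorized chain rule \eqref{numerator_layout_misfit_product}, which reduces the task to finding the Jacobian of the misfit with respect to each weight matrix and then contracting the transpose of that Jacobian against the misfit vector. Throughout I would lean on the standard Kronecker identity $\text{vec}(ABC) = (C^T \otimes A)\text{vec}(B)$ together with its special cases, and on the diagvec--Hadamard identity $\text{vec}(A) \circ \text{vec}(B) = \text{diagvec}(A)\text{vec}(B)$ already invoked in Lemma \ref{diagvec_derivative_lemma}, to translate between vectorized/Kronecker form and clean matrix form. All derivatives are in numerator layout, so the final step in each case is to re-interpret a vectorized expression as the matrix gradient.

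For the decoder block I would first write $\text{vec}(W_1\sigma(W_0X)) = (\sigma(W_0X)^T \otimes I_m)\text{vec}(W_1)$, so that $\partial_{\text{vec}(W_1)}\text{misfit} = -(\sigma(W_0X)^T \otimes I_m)$. Applying \eqref{numerator_layout_misfit_product} and absorbing the sign gives the vectorized gradient $(\sigma(W_0X) \otimes I_m)\text{vec}(W_1\sigma(W_0X) - Y)$, and the identity $(B \otimes I_m)\text{vec}(R) = \text{vec}(RB^T)$ de-vectorizes this directly to $(W_1\sigma(W_0X) - Y)\sigma(W_0X)^T$. This block is routine linear least-squares.

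The encoder block is the substantive one. Here I would first peel off $W_1$ using $\text{vec}(W_1\sigma(W_0X)) = (I_d \otimes W_1)\text{vec}(\sigma(W_0X))$, then apply Lemma \ref{diagvec_derivative_lemma} to obtain $\partial_{\text{vec}(W_0)}\text{misfit} = -(I_d \otimes W_1)\text{diagvec}(\sigma'(W_0X))[X^T \otimes I_r]$. Transposing this Jacobian (using that $\text{diagvec}$ is symmetric and that $(I_d \otimes W_1)^T = I_d \otimes W_1^T$) and contracting against the misfit yields $[X \otimes I_r]\,\text{diagvec}(\sigma'(W_0X))\,(I_d \otimes W_1^T)\,\text{vec}(W_1\sigma(W_0X) - Y)$. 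The remaining work is to unwind this from the inside out: $(I_d \otimes W_1^T)\text{vec}(\cdot)$ becomes $\text{vec}(W_1^T(\cdot))$; the $\text{diagvec}(\sigma'(W_0X))$ factor becomes an elementwise product by $\sigma'(W_0X)$; and finally $[X \otimes I_r]\text{vec}(\cdot)$ becomes right-multiplication by $X^T$. Collecting these gives the claimed $[\sigma'(W_0X) \circ (W_1^T(W_1\sigma(W_0X) - Y))]X^T$.

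The main obstacle is the bookkeeping in this last block: keeping the transposes, the ordering of the three factors, and the Kronecker identities aligned so that each operation (the $I_d \otimes W_1^T$, the $\text{diagvec}$, and the $X \otimes I_r$) converts into exactly the right matrix or Hadamard operation in the correct position. The interplay between the diagvec-to-Hadamard conversion and the two Kronecker factors flanking it is where a transpose slip or a misplaced sign is most likely to creep in, so I would sanity-check the final de-vectorization against small explicit dimensions before declaring the two blocks complete.
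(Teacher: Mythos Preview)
Your proposal is correct and follows essentially the same route as the paper: compute $\partial_{\text{vec}(W_i)}\text{misfit}$ via the Kronecker identity $\text{vec}(ABC)=(C^T\otimes A)\text{vec}(B)$ and Lemma~\ref{diagvec_derivative_lemma}, then contract the transpose against the misfit and de-vectorize each factor (Kronecker $\to$ left/right matrix multiplication, $\text{diagvec}\to$ Hadamard) to recover the matrix-form gradients. The only cosmetic difference is that the paper writes $I_n$ where you (correctly, given $W_1\in\mathbb{R}^{m\times r}$) write $I_m$.
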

\begin{proof}
We derive in vectorized differential form, from which the matrix form derivatives can be extracted. First for the derivative with respect to $D$ we can derive via the matrix partial differential only with respect to $D$:
\begin{align}
\partial \text{misfit} &= - \partial\text{vec}(W_1\sigma(W_0X)) \nonumber \\
										&= -[\sigma(W_0X)^T \otimes I_n] \partial \text{vec}(W_1) 
\end{align}
Thus it follows that 
\begin{equation} 
	\partial_{\text{vec}(W_1)}\text{misfit} = - [\sigma(W_0X)^T \otimes I_n] \label{misfit_w1}
\end{equation}
The $\text{vec}(W_1)$ partial derivative is then: 
\begin{align}
& (\partial_{\text{vec}(W_1)}\text{misfit})^T\text{misfit} \nonumber \\
= &[\sigma(W_0X)^T \otimes I_n]\text{vec}(W_1\sigma(W_0X) - Y) \nonumber \\
= &\text{vec}((W_1\sigma(W_0X) - Y)\sigma(W_0X)^T).
\end{align}
We have then that the matrix form partial derivative with respect to $W_1$ is:
\begin{equation}
	\nabla_{W_1} F(W_1,W_0) = W_1\sigma(W_0X) - Y)\sigma(W_0X)^T.
\end{equation}
For the partial derivative with respect to $W_0$, again we start with the vectorized differential form.
\begin{align}
	\partial_{\text{vec}(W_0)} \text{misfit} &= -\partial_{\text{vec}(W_0)} \text{vec}(W_1\sigma(W_0X)) \nonumber \\
						&= -[I_d \otimes W_1]\partial_{\text{vec}(W_0)} \sigma(\text{vec}(W_0X))
\end{align}
Applying Lemma \ref{diagvec_derivative_lemma} we have:
\begin{equation}\label{misfit_w0}
	\partial_{\text{vec}(W_0)} \text{misfit} = -[I_d \otimes W_1]\text{diagvec}(\sigma'(W_0X))[X^T \otimes I_r].
\end{equation}
The $\text{vec}(W_0)$ partial derivative is then:
\begin{align}
	&(\partial_{\text{vec}(W_0)}\text{misfit})^T \text{misfit} \nonumber \\
 = & [X\otimes I_r]\text{diagvec}(\sigma'(W_0X))[I_d \otimes W_1^T]\text{vec}(W_1\sigma(W_0X) - Y) \nonumber \\
 = & [X\otimes I_r]\text{diagvec}(\sigma'(W_0X))\text{vec}(W_1^T(W_1\sigma(W_0X) - Y)) \nonumber \\
 = & [X \otimes I_r]\text{vec}(\sigma'(W_0X)\circ (W_1^T(W_1\sigma(W_0X) - Y))) \nonumber \\
 = & \text{vec}([\sigma'(W_0X)\circ (W_1^T(W_1\sigma(W_0X) - Y))]X^T),
\end{align}
We have then that the matrix form partial derivative with respect to $W_0$ is:
\begin{equation}
	\nabla_{W_0} F(W_1,W_0) = [\sigma'(W_0X)\circ (W_1^T(W_1\sigma(W_0X) - Y))]X^T.
\end{equation}
\end{proof}

\subsection{Derivation of Hessian}
We now derive the four blocks of the Hessian matrix. I will proceed again by deriving partial differentials in vectorized form. In numerator layout we have
\begin{align}
	&\partial_{\text{vec}(Y)}\partial_{\text{vec}(X)}\bigg(\frac{1}{2}\text{misfit}^T\text{misfit}\bigg) = \nonumber \\
	& (\partial_{\text{vec}(X)}\partial_{\text{vec}(Y)}\text{misfit})^T\text{misfit} + (\partial_{\text{vec}(X)}\text{misfit})^T(\partial_{\text{vec}(Y)}\text{misfit})
\end{align}
The term involving only first partial derivatives of the misfit is the Gauss Newton portion which are already derived in section \ref{derivation_of_grad}. 

\begin{lemma}{Gauss-Newton portions} \label{gn_hessian_derivation}
\begin{align}
&(\partial_{\text{vec}(W_1)}\text{misfit})^T(\partial_{\text{vec}(W_1)}\text{misfit}) \nonumber\\
 = &[\sigma(W_0X)\sigma(W_0X)^T \otimes I_n] \\
&(\partial_{\text{vec}(W_0)}\text{misfit})^T(\partial_{\text{vec}(W_1)}\text{misfit}) \nonumber\\
 = &[X \otimes I_r]\text{diagvec}(\sigma'(W_0X))[\sigma(W_0X)^T \otimes W_1^T] \\
 &(\partial_{\text{vec}(W_1)}\text{misfit})^T(\partial_{\text{vec}(W_0)}\text{misfit}) \nonumber \\
 = &[\sigma(W_0X) \otimes W_1 ]\text{diagvec}(\sigma'(W_0X))[X^T \otimes I_r]\\
 &(\partial_{\text{vec}(W_0)}\text{misfit})^T(\partial_{\text{vec}(W_0)}\text{misfit}) \nonumber \\
 = &[X \otimes I_r]\text{diagvec}(\sigma'(W_0X))[I_d \otimes W_1^TW_1] \cdots \nonumber \\
 & \qquad \cdots\text{diagvec}(\sigma'(W_0X))[X^T \otimes I_r]
\end{align}
\end{lemma}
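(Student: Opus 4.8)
The plan is to build all four Gauss--Newton blocks directly out of the two first-order misfit Jacobians already obtained in Lemma~\ref{derivation_of_grad_lemma}, namely $\partial_{\text{vec}(W_1)}\text{misfit} = -[\sigma(W_0X)^T \otimes I_n]$ in \eqref{misfit_w1} and $\partial_{\text{vec}(W_0)}\text{misfit} = -[I_d \otimes W_1]\,\text{diagvec}(\sigma'(W_0X))\,[X^T \otimes I_r]$ in \eqref{misfit_w0}. Because each block here has the form $(\partial_{\text{vec}(\cdot)}\text{misfit})^T(\partial_{\text{vec}(\cdot)}\text{misfit})$, the overall minus signs square to $+1$ and the misfit vector itself never enters; what remains is a purely algebraic simplification of products of Kronecker-structured matrices. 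First I would record the three identities I need: the transpose rule $(A\otimes B)^T = A^T \otimes B^T$, the mixed-product property $(A\otimes B)(C\otimes D) = (AC)\otimes(BD)$ when the inner dimensions conform, and the fact that $\text{diagvec}(\sigma'(W_0X))$ is diagonal and therefore symmetric, so it is fixed under transposition.

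Then I would proceed block by block. For the $W_1$--$W_1$ block, transposing the lone factor and applying the mixed-product property collapses $[\sigma(W_0X)\otimes I_n][\sigma(W_0X)^T\otimes I_n]$ into $\sigma(W_0X)\sigma(W_0X)^T \otimes I_n$. For the cross blocks, I would first transpose \eqref{misfit_w0}: this reverses the order of its three factors and transposes each, while the central diagvec is left untouched, yielding $[X\otimes I_r]\,\text{diagvec}(\sigma'(W_0X))\,[I_d\otimes W_1^T]$; multiplying on the right by $-[\sigma(W_0X)^T\otimes I_n]$ and merging the two adjacent Kronecker factors via $[I_d\otimes W_1^T][\sigma(W_0X)^T\otimes I] = \sigma(W_0X)^T\otimes W_1^T$ gives the stated $W_0$--$W_1$ expression; the $W_1$--$W_0$ block is then just its transpose. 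For the $W_0$--$W_0$ block, the two copies of diagvec straddle the product $[I_d\otimes W_1^T][I_d\otimes W_1] = I_d\otimes W_1^TW_1$, producing the symmetric diagvec-sandwich in the statement.

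The main obstacle is bookkeeping rather than depth. The central $\text{diagvec}(\sigma'(W_0X))$ factor is not itself a Kronecker product, so the mixed-product property can only be applied to the Kronecker-structured matrices that are immediately adjacent to one another; care is needed to combine exactly those and to leave the diagvec in place, since it does not commute past the surrounding $[\cdot\otimes\cdot]$ blocks. Equally, one must track which identity ($I_n$, $I_r$, or $I_d$) sits in each Kronecker slot so that the inner dimensions conform at every merge; keeping these consistent is the only place an error is likely to creep in.
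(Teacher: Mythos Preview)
Your proposal is correct and is exactly the approach the paper takes: its entire proof is the single sentence ``This result follows from equations \eqref{misfit_w1} and \eqref{misfit_w0},'' and what you have written is precisely the Kronecker-algebra bookkeeping that sentence leaves implicit. Your identification of the key tools (transpose rule, mixed-product property, symmetry of diagvec) and the block-by-block simplifications are all sound.
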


\begin{proof}
This result follows from equations \eqref{misfit_w1} and \eqref{misfit_w0}.
\end{proof}

We proceed by deriving the terms involving second partial derivatives of the misfit, by deriving their action on an arbitrary vector $Z \in \mathbb{R}^{n \times d}$. The matrix $K^{(r,d)} \in \mathbb{R}^{dr \times rd}$ is
the commutation (perfect shuffle) matrix satisfying the equality $K^{(r,d)}\text{vec}(V) = \text{vec}(V)^T$ for $V \in \mathbb{R}^{r\times d}$.
\begin{lemma}{Non Gauss-Newton portions} \label{ngn_hessian_derivation}
\begin{align}
&(\partial_{\text{vec}(W_1)}\partial_{\text{vec}(W_1)}\text{misfit})^T \text{misfit} = 0  \label{w1w1_block_ngn}\\
&(\partial_{\text{vec}(W_0)}\partial_{\text{vec}(W_1)}\text{misfit})^T\text{misfit}  \nonumber \\
= &[I_r \otimes (W_1\sigma(W_0X) - Y)]K^{(r,d)}\text{dvec}(\sigma'(W_0X))[X^T \otimes I_r] \label{w0w1_block_ngn} \\
&(\partial_{\text{vec}(W_1)}\partial_{\text{vec}(W_0)}\text{misfit})^T\text{misfit}  \nonumber \\
= &[X \otimes I_r] \text{dec}(\sigma'(W_0X))[(W_1\sigma(W_0X) - Y)^T \otimes I_r]K^{(n,r)} \label{w1w0_block_ngn} \\
&(\partial_{\text{vec}(W_0)}\partial_{\text{vec}(W_0)}\text{misfit})^T\text{misfit}  \nonumber \\
= &[X \otimes I_r]\text{dvec}\big([W_1^T(W_1\sigma(W_0X) - Y)] \nonumber \\
& \qquad \qquad \qquad \circ \sigma''(W_0X)\big)[X^T\otimes I_r] \label{w0w0_block_ngn}
\end{align}
\end{lemma}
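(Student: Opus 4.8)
The plan is to use the standard splitting of the Hessian of a least-squares objective into a Gauss--Newton piece, $(\partial\,\text{misfit})^T(\partial\,\text{misfit})$, and a non-Gauss--Newton piece, $(\partial^2\text{misfit})^T\text{misfit}$; the Gauss--Newton blocks are already recorded in Lemma~\ref{gn_hessian_derivation}, so only the four second-derivative contractions in this lemma remain. Following the approach indicated in the text, I would compute the action of each second-derivative operator on an arbitrary direction $Z$, starting from the two vectorized first derivatives of the misfit already obtained in Lemma~\ref{derivation_of_grad_lemma}, equations \eqref{misfit_w1} and \eqref{misfit_w0}, differentiating each once more and then contracting against $\text{misfit} = \text{vec}(Y - W_1\sigma(W_0X))$.

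The $W_1$--$W_1$ block is immediate: the misfit is affine in $W_1$, so the first derivative \eqref{misfit_w1} is constant in $W_1$, its further $W_1$-derivative vanishes, and \eqref{w1w1_block_ngn} follows. For the two cross blocks I would differentiate the $W_1$-derivative of the misfit, \eqref{misfit_w1}, with respect to $W_0$. Its only $W_0$-dependence enters through the factor $\sigma(W_0X)^T$, so Lemma~\ref{diagvec_derivative_lemma} supplies the factor $\text{dvec}(\sigma'(W_0X))[X^T \otimes I_r]$; because this derivative acts on a transposed copy of $\sigma(W_0X)$, the commutation matrix $K^{(r,d)}$ is introduced through the identity $K^{(r,d)}\text{vec}(V) = \text{vec}(V^T)$. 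Contracting the resulting operator with the misfit produces the factor $[I_r \otimes (W_1\sigma(W_0X) - Y)]$ and yields \eqref{w0w1_block_ngn}; the mirror block \eqref{w1w0_block_ngn} is its transpose, with $K^{(n,r)}$ replacing $K^{(r,d)}$.

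For the $W_0$--$W_0$ block I would differentiate the $W_0$-gradient $[\sigma'(W_0X)\circ (W_1^T(W_1\sigma(W_0X) - Y))]X^T$ once more in $W_0$ and split by the product rule. Differentiating the misfit factor $W_1\sigma(W_0X) - Y$ reconstructs the Gauss--Newton term already recorded in Lemma~\ref{gn_hessian_derivation}, while differentiating the $\sigma'(W_0X)$ factor turns it into $\sigma''(W_0X)$ via Lemma~\ref{diagvec_derivative_lemma}; this second contribution is the non-Gauss--Newton term, and collecting it in vectorized form gives $\text{dvec}([W_1^T(W_1\sigma(W_0X) - Y)]\circ \sigma''(W_0X))$ flanked by $[X \otimes I_r]$ and $[X^T \otimes I_r]$, which is \eqref{w0w0_block_ngn}.

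The step I expect to be the main obstacle is the Kronecker-product and commutation-matrix bookkeeping in the cross blocks: correctly tracking how the transpose on $\sigma(W_0X)^T$ forces the insertion of $K^{(r,d)}$ (and $K^{(n,r)}$ in the mirror block) and applying the standard commutation-matrix identities of the form $K(A\otimes B) = (B\otimes A)K$ needed to rewrite each expression in the stated layout. Cleanly separating, in the $W_0$--$W_0$ block, which product-rule term is Gauss--Newton and which is the genuine $\sigma''$ contribution also needs care, though this is conceptually routine once the product rule is written out.
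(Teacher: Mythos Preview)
Your proposal is correct and tracks the paper's proof closely. The one methodological difference worth noting is in the $W_0$--$W_0$ block: the paper keeps the test direction $Z$ generic throughout, differentiating $(\partial_{\text{vec}(W_0)}\text{misfit})^T\text{vec}(Z)$ with $Z$ held fixed and only substituting $Z=\text{misfit}$ at the end, which isolates the second-derivative contraction directly without ever producing a Gauss--Newton term to discard; your route of differentiating the full gradient $\nabla_{W_0}F$ and then splitting by the product rule reaches the same place but requires the extra bookkeeping you flagged. For the mirror cross block the paper computes \eqref{w1w0_block_ngn} directly from $(\partial_{\text{vec}(W_0)}\text{misfit})^T\text{vec}(Z)$ rather than invoking transpose symmetry, but your symmetry argument is equally valid.
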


\begin{proof}
Equation \eqref{w1w1_block_ngn} follows from the fact that $W_1$ shows up linearly in the misfit. For the $W_0-W_1$ block we have:
\begin{align}
	(\partial_{\text{vec}(W_1)}\text{misfit})^T\text{vec}(Z) &= -[\sigma(W_0X) \otimes I_n]\text{vec}(Z) \nonumber \\
	&= - \text{vec}(Z \sigma(W_0X)^T) \nonumber \\
	&= - [I_r \otimes Z]\text{vec}(\sigma(W_0X)^T) \nonumber \\
	&= - [I_r \otimes Z]K^{(r,d)}\text{vec}(\sigma(W_0X)).
\end{align}
Equation \eqref{w0w1_block_ngn} follows from taking a partial differential with respect to the vectorization of $W_0$, applying Lemma \ref{diagvec_derivative_lemma}, and substituting the misfit for $Z$. For the $W_1-W_0$ block we have:
\begin{align}
&(\partial_{\text{vec}(W_0)}\text{misfit})^T\text{vec}(Z) \nonumber \\
= &-[X\otimes I_r]\text{diagvec}(\sigma'(W_0X))[I_d \otimes W_1^T]\text{vec}(Z) \nonumber \\
= &- [X\otimes I_r]\text{diagvec}(\sigma'(W_0X))\text{vec}(W_1^TZ) \nonumber \\
= &- [X\otimes I_r]\text{diagvec}(\sigma'(W_0X))[Z^T \otimes I_r]K^{(n,r)}\text{vec}(W_1). 
\end{align}
Equation \eqref{w1w0_block_ngn} follows from taking a partial differential with respect to the vectorization of $W_0$ and subsituting the misfit for $Z$. Lastly for the $W_0-W_0$ block we have:
\begin{align}
&(\partial_{\text{vec}(W_0)}\text{misfit})^T\text{vec}(Z) \nonumber \\
= &- [X\otimes I_r]\text{diagvec}(\sigma'(W_0X))\text{vec}(W_1^TZ) \nonumber \\
= &- [X\otimes I_r]\text{vec}(\sigma'(W_0X)) \circ \text{vec}(W_1^TZ) \nonumber \\
= &- [X\otimes I_r]\text{vec}(W_1^TZ) \circ \text{vec}(\sigma'(W_0X))\nonumber \\
= &- [X\otimes I_r]\text{diagvec}(W_1^TZ) \text{vec}(\sigma'(W_0X)).
\end{align}
Equation \eqref{w0w0_block_ngn} follows from taking a partial differential with respect to the vectorization of $W_0$, applying Lemma \ref{diagvec_derivative_lemma}, and substituting the misfit in for $Z$.
\end{proof}

\section{Deep Dense Neural Network Gradient Derivation}
The least squares loss function may be stated as:
\begin{align}
	&F(W_0,W_1,\dots,W_N) = \frac{1}{2}\text{misfit}^T\text{misfit} \nonumber \\
	&\text{misfit} = \text{vec}(Y - W_N\sigma_N(W_{N-1}\sigma_{N_1}( \cdots \sigma_1(W_0X)\cdots ))).
\end{align}
Numerator layout partial differentials are the same as in Equation \eqref{numerator_layout_misfit_product}. The partial derivatives require repeated application of the chain rule and Lemma \ref{diagvec_derivative_lemma}, which can be stated:
\begin{align} \label{deep_chain_rule}
	&\partial_{\text{vec}(W_j)}\text{vec}(\sigma_{j+2}(W_{j+1}\sigma_{j+1}(W_j\sigma_j(\cdots)))) \nonumber \\
	= &\text{dvec}(\sigma'_{j+2}(W_{j+1}\sigma_{j+1}(W_j\sigma_j(\cdots)))) \cdots \nonumber \\
	&[I_d \otimes W_{j+1}]\partial_{\text{vec}(W_j)}\text{vec}(\sigma_{j+1}(W_j\sigma_j(\cdots)))
\end{align}

\begin{lemma}{Deep neural network gradients} \label{deep_nn_grad_lemma}
\begin{align}
\nabla_{W_j}F(\mathbf{W}) = \bigg[\sigma'_{j+1}(W_j\sigma_j \cdots \sigma_1(W_0X)\cdots) \circ \nonumber \\
								\big(W_{j+1}^T\big(\sigma_{j+2}'(W_{j+1}\sigma_j\cdots \sigma_1(W_0X)\cdots) \circ \nonumber \\
								\cdots \circ \big( W_{N-1}^T\big(\sigma'_N(W_{N-1} \cdots \sigma_1(W_0X)\cdots) )	 \cdots \nonumber \\
								\circ \big(W_N^T(W_N\sigma_N(W_{N-1} \cdots \sigma_1(W_0X)\cdots) - Y)\big)	\cdots\big)\big)\big)\bigg] \nonumber \\
								\sigma_j(W_j \cdots \sigma_1(W_0X))^T
\end{align}
\end{lemma}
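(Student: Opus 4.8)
The plan is to compute the gradient in vectorized form and then convert it to matrix form, exactly as in the shallow derivation (Lemma \ref{derivation_of_grad_lemma}), but iterating the chain rule once per layer between $W_j$ and the output. Writing $F = \frac12\,\text{misfit}^T\text{misfit}$, I would first invoke the numerator-layout identity \eqref{numerator_layout_misfit_product} to reduce the problem to computing the single Jacobian $\partial_{\text{vec}(W_j)}\text{misfit}$ and then forming $(\partial_{\text{vec}(W_j)}\text{misfit})^T\text{misfit}$.

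First I would compute $\partial_{\text{vec}(W_j)}\text{misfit}$ by propagating the chain rule \eqref{deep_chain_rule} from the outermost layer $N$ inward to layer $j$. The dependence of the misfit on $W_j$ passes through the activations $\sigma_{j+1}, \sigma_{j+2}, \ldots, \sigma_N$, so each step contributes one factor $\text{dvec}(\sigma'_k(\cdots))$ together with one Kronecker factor $[I_d \otimes W_k]$ for $k = N, N-1, \ldots, j+1$, terminating with the base case $\partial_{\text{vec}(W_j)}\text{vec}(\sigma_{j+1}(W_j\cdots))$ supplied by Lemma \ref{diagvec_derivative_lemma} (with the input matrix to layer $j$ playing the role of $X$ there). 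This produces $\partial_{\text{vec}(W_j)}\text{misfit}$ as an ordered product of $\text{dvec}$ and Kronecker factors, up to the leading minus sign inherited from $\text{misfit} = \text{vec}(Y - \cdots)$. To handle the arbitrary layer count cleanly I would phrase this step as an induction on $N - j$.

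The second step is to transpose this product and collapse it back to matrix form. Transposition reverses the factor order and turns each $[I_d \otimes W_k]$ into $[I_d \otimes W_k^T]$, while the diagonal $\text{dvec}$ factors are symmetric. I would then repeatedly apply the three identities $[I_d \otimes C]\text{vec}(B) = \text{vec}(CB)$, $\text{dvec}(A)\text{vec}(B) = \text{vec}(A \circ B)$, and $[A^T \otimes I_r]\text{vec}(B) = \text{vec}(BA)$ already used in Lemma \ref{derivation_of_grad_lemma}, peeling off from the inside out. Applied to $\text{misfit} = \text{vec}(Y - W_N\sigma_N(\cdots))$, each pair of factors reconstructs one $W_k^T$ multiplication nested inside an Hadamard product against $\sigma'_k$, reproducing exactly the nested bracket in the statement, with the trailing Kronecker factor producing the final right-multiplication by the transposed layer input $\sigma_j(W_j\cdots)^T$.

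I expect the main obstacle to be bookkeeping rather than any conceptual difficulty: keeping the order of the reversed factors aligned with the deeply nested Hadamard/transpose structure of the claimed expression, and correctly matching indices so that $\sigma'_k$ pairs with $W_k^T$ at each level. Carrying out the collapse as a clean induction on the number of intervening layers, and sanity-checking the base case $N = 1$ against the shallow gradient of Lemma \ref{derivation_of_grad_lemma}, should keep the sign and index conventions under control.
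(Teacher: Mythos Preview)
Your proposal is correct and matches the paper's own proof essentially step for step: the paper also computes $\partial_{\text{vec}(W_j)}\text{misfit}$ by iterating the chain rule \eqref{deep_chain_rule} through the layers to obtain the ordered product of $[I_d\otimes W_k]$ and $\text{dvec}(\sigma'_k(\cdots))$ factors (with the terminal Kronecker factor coming from Lemma~\ref{diagvec_derivative_lemma}), and then invokes \eqref{numerator_layout_misfit_product} together with the Kronecker/Hadamard identities from Appendix~\ref{shallow_appendix} to collapse $(\partial_{\text{vec}(W_j)}\text{misfit})^T\text{misfit}$ into the claimed matrix form. Your added remarks about framing the iteration as an induction on $N-j$ and checking against the shallow case are sensible elaborations of the same argument.
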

\begin{proof}
By iterative application of the chain rule (Equation \eqref{deep_chain_rule}) we can derive the following
\begin{align}
&\partial_{\text{vec}(W_j)} \text{misfit} = \nonumber \\
- &[I_d \otimes W_N]\text{dvec}(\sigma_N'(W_N \cdots \sigma_1(W_0X) \cdots))\cdots  \nonumber \\
 &[I_d \otimes W_{j+1}]\text{dvec}(\sigma'_{j+1}(W_j \cdots \sigma_1(W_0X) \cdots )) \nonumber \\
 &[\sigma_j(W_j \cdots \sigma_1(W_0X) \cdots ) \otimes I_{r_j}].
\end{align}
The result then follows from Equation \eqref{numerator_layout_misfit_product} and properties of Kronecker and Hadamard products that are used in Appendix \ref{shallow_appendix}.
\end{proof}

\bibliography{arxiv_illposed.bib}
\bibliographystyle{icml2020}



\end{document}